\newcommand{\arxiv}[1]{\href{http://arxiv.org/abs/#1}{arXiv:#1}}
\newcommand*{\mailto}[1]{\href{mailto:#1}{\nolinkurl{#1}}}
\newcommand{\nn}{\nonumber}
\newcommand{\be}{\begin{equation}}
\newcommand{\ee}{\end{equation}}
\newcommand{\ol}{\overline}
\newcommand{\spr}[2]{\langle #1 , #2 \rangle}
\DeclareMathOperator{\ran}{Ran}
\DeclareMathOperator{\tr}{tr}
\DeclareMathOperator*{\slim}{s-lim}
\newcommand{\eps}{\varepsilon}
\newcommand{\gam}{\gamma}
\newcommand{\sig}{\sigma}
\newcommand{\lam}{\lambda}
\newcommand{\id}{{\mathbb I}}
\newcommand{\I}{\mathrm{i}}
\newcommand{\E}{\mathrm{e}}
\newcommand{\R}{{\mathbb R}}
\newcommand{\N}{{\mathbb N}}
\newcommand{\C}{{\mathbb C}}
\newcommand{\clo}{\mathfrak{C}}
\newcommand{\hr}{\mathfrak{H}}
\newcommand{\db}{\mathfrak{D}}
\newcommand{\qb}{\mathfrak{Q}}
\newtheorem{theorem}{Theorem}[section]
\newtheorem{lemma}[theorem]{Lemma}
\newtheorem{corollary}[theorem]{Corollary}
\theoremstyle{definition}
\newtheorem{example}[theorem]{Example}
\newtheorem{remark}[theorem]{Remark}
\newtheorem{hypothesis}[theorem]{Hypothesis}
\numberwithin{equation}{section}
\begin{document}

\title[Generalized Resolvent Convergence]{On Generalized Strong and Norm Resolvent Convergence}

\author[G. Teschl]{Gerald Teschl}
\address{Faculty of Mathematics\\ University of Vienna\\
Oskar-Morgenstern-Platz 1\\ 1090 Wien\\ Austria}
\email{\mailto{Gerald.Teschl@univie.ac.at}}
\urladdr{\url{https://www.mat.univie.ac.at/~gerald/}}

\author{Yifei Wang}
\address{State Key Laboratory of Mathematical Sciences\\ Academy of Mathematics and Systems Science\\  Chinese Academy of Sciences\\ 100190  Beijing\\ China\\ School of Mathematical Sciences\\ University of Chinese Academy of Sciences\\ 100049 Beijing\\ China}
\email{\mailto{wangyifei@amss.ac.cn}}

\author{Bing Xie}
\address{School of Mathematics and Statistics\\ Shandong University\\ 264209 Weihai\\ China}
\email{\mailto{xiebing@sdu.edu.cn}}

\author{Zhe Zhou}
\address{State Key Laboratory of Mathematical Sciences\\ Academy of Mathematics and Systems Science\\  Chinese Academy of Sciences\\ 100190  Beijing \\ China}
\email{\mailto{zzhou@amss.ac.cn}}
\keywords{Norm resolvent convergence, Sturm--Liouville operators}
\subjclass[2000]{Primary 47A55, 47A10; Secondary 34L40, 34L05}

\begin{abstract} 
We present a streamlined approach for generalized strong and norm convergence of self-adjoint operators
in different Hilbert spaces. In particular, we establish convergence of associated (semi-)groups, (essential) spectra
and spectral projections. In addition, we give some applications to Sturm--Liouville operators.
\end{abstract}

\maketitle

\section{Introduction}

The standard notions of convergence for unbounded self-adjoint operators are strong and norm resolvent convergences. In the classical setting, all operators are defined on the same Hilbert space. However,
there are many situations of practical interest where two operators are defined on different
Hilbert spaces. For example, if one studies the case where an operator is restricted to
a sequence of subspaces. To accommodate such situations, it is natural to work with a corresponding sequence
of embedding operators and this is commonly known as generalized strong/norm resolvent convergences
and dates at least back to Stolz and Weidmann \cite{SW93,SW95} who applied this to the
approximation of singular Sturm--Liouville operators with regular ones. Further extensions were given
by Weidmann in his monograph \cite[Section~9.3]{weidmann1} (see also \cite{T08} and \cite[Section~6.6]{tschroe}). Other applications are
the case of Sturm--Liouville operators (or, more general, canonical systems, elliptic partial differential operators) with
varying weight functions or the approximation of Schr\"odinger operators on different graphs.
In particular, for the last purpose, generalized strong/norm resolvent convergences was further
extended by Post in \cite{P06} and in his monograph \cite[Chapter~4]{post}. See also the recent overview by Post and Simmer \cite{PS25}.
Furhtermore, perturbation theory for such situations also plays an important role in the study of indefinite Sturm--Liouville operators \cite{BSTT24}.
Our main motivation here stems from applications to Sturm--Liouville operators with varying weight functions.

The main purpose of the present paper is to advocate a streamlined approach to both generalized
strong and norm resolvent convergence. Our motivation is to reduce the number of requirements for the embedding operators and
only work with one natural condition, equations \eqref{Jcos} and \eqref{Jcon}, for generalized strong and norm resovent convergence,
respectively. In contradistinction to Stolz and Weidmann, we do not require our embedding operators to be projections.
While at least in the case of strong resolvent convergence this seems frequent, the present slightly more general approach seems natural
also in this context and does not impose any additional difficulties. In contradistinction to Post, we work with a reduced number of requirements and
we do not introduce a quantified concept of $\delta$-quasi-unitary, since we are only interested in limiting results. This
allows for a streamlined approach providing the (from our point of view) main results, including full proofs, in just seven pages.
While overall we follow the presentation of \cite[Section~6.6]{tschroe} (which in turn is inspired by \cite[Section~9.3]{weidmann1}),
we have tried to further simplify proofs (to compensate for the extra complications added by the embedding operators) and we
also include some amplifications, which even go beyond classical results at a few points. In addition, again in contradistionction
to Post, we do not require semiboundedness for our operators.

In our final section we will provide some applications of these results and show how to improve some recent results which have been
obtained using different methods.

Overall we hope that this result will foster further research in this direction as we consider this as an important topic in
operator theory which deserves more attention. 

\section{Generalized strong and norm resolvent convergence}

Let $\hr$ and $\hr_n$, $n\in\N$, be separable Hilbert spaces together with bounded operators $J_n\in\mathcal{B}(\hr, \hr_n)$ such that
\be\label{Jcon}
\|J_n \| \le C, \qquad \|J_n^* J_n - \id \| = \|J_n J_n^* - \id \|\to 0,
\ee
where $C$ is a positive constant and $J_n^*$ is the adjoint operator of $J_n$.
While this is similar to \cite[Chapter~4]{post}, we have opted for a slightly more streamlined definition, which is simpler
and more natural for our intended purpose.
In this connection note that \eqref{Jcon} implies that $J_n^* J_n$ and $J_n J_n^*$ are both invertible for sufficiently large $n$. Hence, it is not hard to see that both $J_n^{-1} = J_n^*(J_n J_n^*)^{-1}$ and $(J_n^*)^{-1} = (J_n J_n^*)^{-1} J_n$ exist in this case with $\|J_n^* - J_n^{-1}\|\to 0$,
$\|J_n\|=\|J_n^*\|\to 1$ and $\|J_n^{-1}\|=\|(J_n^*)^{-1}\|\to 1$.

We will also look at the case where
\be\label{Jcos}
\|J_n \| \le C, \qquad \|J_n^* J_n \psi - \psi \| \to 0, \quad \psi\in\hr.
\ee
In this case, $J_n$ does not have to be eventually invertible and we only have that $\liminf\limits_{n\rightarrow \infty} \|J_n\|\ge 1$. However, in the applications
we have in mind, $J_n$ will be surjective and hence $J_n^*$ will have a bounded inverse by Banach's closed range theorem (the
converse being also true).

\begin{example}\label{jn}
The first example we have in mind are weighted Lebesgue spaces $\hr :=  L^2(\R^d, w_\infty(x) \mathrm{d}x)$, $\hr_n :=  L^2(\R^d, w_n(x) \mathrm{d}x)$
with $w_n(x) \le C w_\infty(x)$ and $\|\frac{w_n}{w_\infty} - 1\|_\infty \to 0$, where we set $\frac{0}{0} : =1$.
In this case $J_n \psi = \psi$ is simply the natural embedding and $J_n^*$ is multiplication with $\frac{w_n}{w_\infty}$.
Equation \eqref{Jcon} will hold if $\frac{w_n}{w_\infty} \to 1$ uniformly.

The other example is where $\hr_n \subseteq \hr$ is a sequence of subspaces. In this case $J_n$ is the corresponding orthogonal projection
and $J_n^*$ is the natural embedding. Note that we have $J_n J_n^* = \id$ and \eqref{Jcos} will hold if $J_n\to \id$ strongly.
\end{example}

Let $A\in\clo(\hr)$, $A_n\in\clo(\hr_n)$ be (densely defined) closed operators. Then we say that $A_n$ converges to $A$ in generalized norm resolvent sense if \eqref{Jcon} holds and
\be\label{defnrc}
\lim\limits_{n\to\infty} \|J_n^* R_{A_n}(z) J_n - R_A(z) \| = 0
\ee
for one $z\in \Gamma=\C\setminus\Sigma$, $\Sigma:=\ol{\sig(A)\cup \bigcup_n \sig(A_n)}$. Using \eqref{Jcon} this is equivalent to
\be\label{defnrc2}
\lim\limits_{n\to\infty} \|R_{A_n}(z) J_n - J_n R_A(z) \| = 0.
\ee
Similarly, we say that $A_n$ converges to $A$ in generalized strong resolvent sense if \eqref{Jcos} holds and
\be\label{defsrc}
\lim\limits_{n\to\infty} \|J_n^* R_{A_n}(z) J_n \psi - R_A(z) \psi \| = 0, \qquad \psi\in\hr,
\ee
for one $z\in \Gamma$. As before, by \eqref{Jcos}, this is implied by
\be\label{defsrc2}
\lim\limits_{n\to\infty} \|R_{A_n}(z) J_n \psi - J_n R_A(z) \psi \| = 0, \qquad \psi\in\hr,
\ee
but the converse might not be true unless $J_n$ is surjective.

\begin{remark}
In the classical case, one often avoids taking the closure in the definition of $\Sigma$. Indeed, since strong/norm convergent sequences are bounded, a $z$ for which \eqref{defnrc} or \eqref{defsrc} hold will always be in the interior of $\Gamma$ in this case. In our situation, this is still
true for \eqref{defnrc} since $J_n$ is invertible as pointed out before. It is also true for \eqref{defsrc} if $J_n$ is an orthogonal
projection or if $J_n$ is invertible. However, in general it could fail and it does not seem worth to add extra conditions to include these
additional points in $\Gamma$.
\end{remark}

\begin{theorem}\label{fnorm}
Let $A_n$, $A$ be self-adjoint operators and suppose that $A_n$ converges to $A$ in  generalized norm resolvent sense.
Then 
\be
\lim_{n\to\infty} J_n^* f(A_n) J_n = \lim_{n\to\infty} J_n^{-1} f(A_n) J_n = f(A)
\ee
for every bounded continuous function $f\!:\! \Sigma\! \to \! \C$ with
$\lim\limits_{\lam\to-\infty}f(\lam)=\lim\limits_{\lam\to\infty}f(\lam)$.

If $A_n$ converges to $A$ in  generalized strong resolvent sense, then
\be
\slim_{n\to\infty} J_n^* f(A_n)J_n = f(A)
\ee
for every bounded continuous function $f: \Sigma \to \C$.
\end{theorem}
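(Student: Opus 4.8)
The plan is to reduce the functional calculus to the resolvents, for which convergence is the hypothesis, and then to apply the Stone--Weierstrass theorem. Write $\Phi_n(f):=J_n^*f(A_n)J_n$ and $r_z(\lam):=(\lam-z)^{-1}$; from the spectral theorem together with \eqref{Jcon} (resp.\ \eqref{Jcos}) one has the uniform bound $\|\Phi_n(f)\|\le\|J_n\|^2\|f\|_\infty\le C^2\|f\|_\infty$. Let $\mathcal F$ be the set of admissible $f$ with $\Phi_n(f)\to f(A)$ in the relevant topology (norm, resp.\ strong). Using this bound and the estimate $\|\Phi_n(f)-f(A)\|\le(C^2+1)\|f-g\|_\infty+\|\Phi_n(g)-g(A)\|$ (with the obvious modification applied to a fixed vector in the strong case) one checks at once that $\mathcal F$ is a complex vector space, closed under uniform limits, and that $1\in\mathcal F$ since $\Phi_n(1)=J_n^*J_n\to\id$ by \eqref{Jcon} (resp.\ \eqref{Jcos}). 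Fixing a non-real $z_0\in\Gamma$ (such points exist as $\Sigma\subseteq\R$), the function $r_{z_0}$ already separates the points of the compact space $\Sigma\cup\{\infty\}$, so the unital $*$-algebra $\mathcal R$ generated by $r_{z_0}$, which contains $r_{\bar z_0}=\overline{r_{z_0}}$, is dense in the continuous functions on $\Sigma\cup\{\infty\}$, i.e.\ in the bounded continuous $f$ with $\lim_{\lam\to-\infty}f=\lim_{\lam\to+\infty}f$. As $\mathcal F$ is closed, it therefore suffices to prove $\mathcal R\subseteq\mathcal F$, and by partial fractions $\mathcal R$ is the linear span of the constants and the powers $r_{z_0}^k,r_{\bar z_0}^k$ ($k\ge1$); so the whole problem collapses to showing $\Phi_n(r_{z_0}^k)\to R_A(z_0)^k$ and $\Phi_n(r_{\bar z_0}^k)\to R_A(\bar z_0)^k$ in the appropriate topology.

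In the norm case this is straightforward. Convergence at $\bar z_0$ is free, because taking adjoints is norm continuous and $\Phi_n(r_{\bar z_0})=\Phi_n(r_{z_0})^*$. For the powers I would use $\|J_nJ_n^*-\id\|\to0$: since $\Phi_n(r_{z_0}^{k+1})-\Phi_n(r_{z_0})\Phi_n(r_{z_0}^{k})=J_n^*R_{A_n}(z_0)(\id-J_nJ_n^*)R_{A_n}(z_0)^{k}J_n$ has norm $O(\|J_nJ_n^*-\id\|)\to0$, the map $\Phi_n$ is asymptotically multiplicative and a trivial induction gives $\Phi_n(r_{z_0}^k)\to R_A(z_0)^k$ in norm (and likewise at $\bar z_0$). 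This proves the first displayed limit; the second, with $J_n^{-1}$ in place of $J_n^*$, then follows from $\|J_n^{-1}f(A_n)J_n-J_n^*f(A_n)J_n\|\le\|J_n^{-1}-J_n^*\|\,\|f\|_\infty\,\|J_n\|\to0$, using the estimates recorded after \eqref{Jcon}.

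The strong case is the substantial one, and here lies the main obstacle: $J_nJ_n^*$ is no longer controlled, so neither the multiplicativity nor the adjoint argument above is available (strong convergence of $\Phi_n(r_{z_0})$ does not even force strong convergence of its adjoint $\Phi_n(r_{\bar z_0})$). The device I would use in their place is a conservation-of-norm estimate for the defect vectors $(\id-J_nJ_n^*)R_{A_n}(z_0)^kJ_n\psi$. Writing $\phi_n:=R_{A_n}(z_0)^kJ_n\psi$, the first resolvent identity turns $\|\phi_n\|^2=\langle J_n\psi,|R_{A_n}(z_0)^k|^2J_n\psi\rangle=\langle\psi,\Phi_n(g)\psi\rangle$ into a quadratic form of $\Phi_n$ at $g(\lam)=|\lam-z_0|^{-2k}$, a real function whose partial-fraction expansion is a linear combination of the $r_{z_0}^j$ and their conjugates $r_{\bar z_0}^j$; since the quadratic forms $\langle\psi,\Phi_n(r_{z_0}^j)\psi\rangle$ converge (inductive hypothesis) and those at $\bar z_0$ converge automatically as their complex conjugates, one gets $\|\phi_n\|\to\|R_A(z_0)^k\psi\|$. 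Because $\|J_n^*\phi_n\|=\|\Phi_n(r_{z_0}^k)\psi\|$ and $\|J_nJ_n^*\phi_n\|^2=\langle J_n^*\phi_n,(J_n^*J_n)J_n^*\phi_n\rangle$ tend to the same limit (the latter as $J_n^*J_n\to\id$ strongly and $J_n^*\phi_n$ converges), expanding $\|(\id-J_nJ_n^*)\phi_n\|^2=\|\phi_n\|^2-2\|J_n^*\phi_n\|^2+\|J_nJ_n^*\phi_n\|^2$ shows the defect vanishes. Feeding this into $\Phi_n(r_{z_0}^{k+1})\psi=J_n^*R_{A_n}(z_0)(J_nJ_n^*\phi_n+(\id-J_nJ_n^*)\phi_n)$ — the first summand being $\Phi_n(r_{z_0})$ applied to the convergent vector $\Phi_n(r_{z_0}^k)\psi$, the second of order $\|(\id-J_nJ_n^*)\phi_n\|$ — completes the induction at $z_0$. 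The companion convergence at the conjugate point $\bar z_0$, where strong convergence of $\Phi_n(r_{\bar z_0})$ is genuinely \emph{not} inherited from $z_0$ and must be produced by the same conservation-of-norm mechanism, is the delicate heart of the generalized strong theory. Once it is in place, Stone--Weierstrass yields $\slim_n\Phi_n(f)=f(A)$ for every $f\in C(\Sigma\cup\{\infty\})$.

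Finally, I would extend the strong statement from $C(\Sigma\cup\{\infty\})$ to all bounded continuous $f$ by a tightness argument, which has no norm analogue and accounts for the absence of any condition at $\pm\infty$ there. Fix $\psi$ and set $\mu_n:=\langle J_n\psi,E_{A_n}(\cdot)J_n\psi\rangle$, with $\mu$ the corresponding spectral measure of $A$. Applying the convergence just proved to $g_R(\lam):=\lam^2/(R^2+\lam^2)\in C(\Sigma\cup\{\infty\})$ gives $\int g_R\,d\mu_n\to\int g_R\,d\mu$; since $g_R\ge\tfrac12$ on $\{|\lam|\ge R\}$ and $\int g_R\,d\mu\to0$ as $R\to\infty$, the family $(\mu_n)$ is tight, i.e.\ $\limsup_n\mu_n(\{|\lam|\ge R\})\to0$. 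Choosing $\eta\in C(\Sigma\cup\{\infty\})$ with $\eta\equiv1$ on $[-R,R]$, the bound $\|\Phi_n(f(1-\eta))\psi\|^2\le C^2\|f\|_\infty^2\,\mu_n(\{|\lam|>R\})$ together with its analogue for $A$ reduces $f$ to $f\eta\in C(\Sigma\cup\{\infty\})$, completing the proof.
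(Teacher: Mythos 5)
Your norm-case argument is correct, and it is essentially the paper's own approach made explicit: the paper runs Stone--Weierstrass over the $*$-subalgebra of good functions, while you generate that algebra concretely by partial fractions and prove asymptotic multiplicativity through the identity $\Phi_n(r_{z_0}^{k+1})-\Phi_n(r_{z_0})\Phi_n(r_{z_0}^{k})=J_n^*R_{A_n}(z_0)(\id-J_nJ_n^*)R_{A_n}(z_0)^{k}J_n$, where $\Phi_n(f):=J_n^*f(A_n)J_n$; this is a legitimate substitute for the paper's appeal to continuity of products, and your passage from $J_n^*$ to $J_n^{-1}$ matches the paper. In the strong case, two of your ingredients are not only correct but more careful than the paper's one-line assertions: the conservation-of-norm argument giving $(\id-J_nJ_n^*)R_{A_n}(z_0)^kJ_n\psi\to0$ (which repairs the failure of multiplicativity of $\Phi_n$ when only \eqref{Jcos} is assumed), and the closing tightness argument that extends the statement from $C(\Sigma\cup\{\infty\})$ to all bounded continuous $f$.

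The genuine gap is exactly at the point you yourself call ``the delicate heart'': convergence at the conjugate point $\bar z_0$. You assert it ``must be produced by the same conservation-of-norm mechanism,'' but that mechanism cannot produce it, because one of its inputs is missing there. At $z_0$ your induction used three facts: (a) $\|R_{A_n}(z_0)^kJ_n\psi\|\to\|R_A(z_0)^k\psi\|$ (from weak convergence plus partial fractions), (b) $\|J_n^*R_{A_n}(z_0)^kJ_n\psi\|\to\|R_A(z_0)^k\psi\|$, whose base case $k=1$ is the norm half of the \emph{hypothesis}, and (c) $\|J_nJ_n^*R_{A_n}(z_0)^kJ_n\psi\|\to$ the same limit, which you derive from (b). Now set $\chi_n:=R_{A_n}(\bar z_0)J_n\psi$. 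Item (a) still holds, but item (b) at $\bar z_0$, namely $\|J_n^*\chi_n\|\to\|R_A(\bar z_0)\psi\|$, \emph{is} the statement to be proved: weak convergence of adjoints only gives $\liminf_n\|J_n^*\chi_n\|\ge\|R_A(\bar z_0)\psi\|$, and since $\|J_n^*\chi_n\|^2=\|\chi_n\|^2+\langle\chi_n,(J_nJ_n^*-\id)\chi_n\rangle$ with $\|\chi_n\|^2\to\|R_A(\bar z_0)\psi\|^2$, the missing reverse inequality is equivalent to $\limsup_n\langle\chi_n,(J_nJ_n^*-\id)\chi_n\rangle\le 0$. Nothing in \eqref{Jcos} or in the hypothesis at $z_0$ supplies this: \eqref{Jcos} controls $J_n^*J_n$ but says nothing about $J_nJ_n^*$, and $\chi_n$ is not reachable from the controlled vectors $R_{A_n}(z_0)^kJ_n\psi'$, because $r_{\bar z_0}$ does not lie in the uniform closure of polynomials in $r_{z_0}$ (it never does when $\Sigma=\R$, the disk-algebra obstruction you implicitly run into). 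So your induction has no base case on the $\bar z_0$ side, and the strong statement remains unproved. A secondary issue of the same kind: the hypothesis only gives convergence at \emph{one} $z\in\Gamma$, which may be real; your norm-case argument survives with $z_0$ real, but your strong-case mechanism (partial fractions of $|r_{z_0}|^{2k}$ into powers of $r_{z_0}$ and $r_{\bar z_0}$) genuinely requires $z_0\notin\R$, so a real starting point would also need an argument.

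For comparison, the paper closes this step by a different device: from $J_n^*f(A_n)J_n\psi\to f(A)\psi$ it gets weak convergence of $J_n^*f^*(A_n)J_n\psi$, and then claims the norm identity $\|J_n^*f(A_n)J_n\psi\|=\|J_n^*f^*(A_n)J_n\psi\|$ from normality of $f(A_n)$. Note that normality gives $\|f(A_n)J_n\psi\|=\|f^*(A_n)J_n\psi\|$, i.e.\ equality \emph{before} the outer $J_n^*$ is applied; with the compression the identity is clean when $J_nJ_n^*=\id$ (the orthogonal-projection example), and in general it again involves exactly the uncontrolled operator $J_nJ_n^*$. In other words, the step you could not complete is the step where some structural input about adjoints (normality together with control of $J_nJ_n^*$ on the vectors $f(A_n)J_n\psi$) must be injected; it cannot be generated by the algebraic induction you set up, and any completion of your proof has to add such an ingredient rather than cite ``the same mechanism.''
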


\begin{proof}
The set of functions for which the claim holds forms a $*$-subalgebra.
That it is a subalgebra follows since addition, scalar multiplication and products are continuous with respect to norm/strong
convergence. To see that the claim holds for $f^*$ if it holds for $f$, use that
$\|J_n^* f(A_n) J_n - f(A) \| = \|J_n^* f^*(A_n) J_n - f^*(A) \|$ in the case of norm convergences. In the case of strong
convergence, note that $J_n^* f(A_n) J_n \psi \to f(A) \psi$ implies weak convergence $J_n^* f^*(A_n) J_n \psi \rightharpoonup f^*(A) \psi$
and $\|J_n^* f(A_n) J_n \psi\| = \|J_n^* f^*(A_n) J_n \psi\| \to \|f(A) \psi\| = \|f^*(A) \psi\|$ since $f(A_n)$ and $f(A)$ are normal.
Together, both show $J_n^* f^*(A_n) J_n \psi \to f^*(A) \psi$.

Since this $*$-subalgebra contains $f(\lam)=1$ and $f(\lam)=\frac{1}{\lam-z_0}$, $z_0\in\Gamma$, it is dense by the Stone--Weierstra{\ss} theorem.
The usual $\frac{\eps}{3}$ argument shows that this $*$-subalgebra is also closed. 
\end{proof}

Two easy consequences are worth noticing:

\begin{corollary}
Suppose $A_n$ converges to $A$ in  generalized strong or norm resolvent sense for
one $z_0\in\Gamma$. Then this holds for all $z\in\Gamma$.
\end{corollary}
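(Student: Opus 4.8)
The plan is to deduce both statements directly from Theorem~\ref{fnorm} by specializing its functional calculus to a resolvent function, so that no new resolvent identities or uniform bounds are needed. Fix an arbitrary $z\in\Gamma$ and consider $f(\lam)=\frac{1}{\lam-z}$. Since $z\in\Gamma=\C\setminus\Sigma$ and $\Sigma$ is closed, we have $\operatorname{dist}(z,\Sigma)>0$, so $f$ is bounded (by $\operatorname{dist}(z,\Sigma)^{-1}$) and continuous on $\Sigma$; moreover $f(\lam)\to 0$ as $\lam\to\pm\infty$, so $f$ also meets the extra hypothesis $\lim_{\lam\to-\infty}f(\lam)=\lim_{\lam\to\infty}f(\lam)$ demanded in the norm case. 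Thus $f$ is an admissible function for both parts of Theorem~\ref{fnorm}, and this is really the only thing to check.

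First I would record the tautology $f(A_n)=R_{A_n}(z)$ and $f(A)=R_A(z)$. Then, under the hypothesis that $A_n\to A$ in generalized norm resolvent sense at $z_0$, the norm part of Theorem~\ref{fnorm} gives $\lim_n\|J_n^* R_{A_n}(z) J_n - R_A(z)\|=0$, which is exactly \eqref{defnrc} at the point $z$; as $z\in\Gamma$ was arbitrary, this establishes generalized norm resolvent convergence for every $z\in\Gamma$. Likewise, under the hypothesis of generalized strong resolvent convergence at $z_0$, the strong part of Theorem~\ref{fnorm} (which imposes no condition at $\pm\infty$ and hence applies to $f$ without any further restriction) yields $\slim_n J_n^* R_{A_n}(z) J_n = R_A(z)$, i.e.\ \eqref{defsrc} holds at every $z\in\Gamma$.

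I do not expect a genuine obstacle here: the whole content is the verification that $(\lam-z)^{-1}$ belongs to the admissible class, which is the boundedness/continuity observation above together with the vanishing at infinity. Indeed, the proof of Theorem~\ref{fnorm} already singles out the resolvent functions $\frac{1}{\lam-z_0}$ as members of the relevant $*$-subalgebra, so the corollary is immediate once the functional calculus of Theorem~\ref{fnorm} is in hand; in particular it is not necessary to invoke the equivalent formulations \eqref{defnrc2} and \eqref{defsrc2}, nor to propagate the convergence by a chain-of-disks or Neumann-series argument.
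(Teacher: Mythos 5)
Your proof is correct and is exactly the argument the paper intends: the corollary is stated as an immediate consequence of Theorem~\ref{fnorm}, obtained by applying it to $f(\lam)=\frac{1}{\lam-z}$, which is bounded and continuous on the closed set $\Sigma$ with $\lim_{\lam\to-\infty}f(\lam)=\lim_{\lam\to\infty}f(\lam)=0$. Your verification of these admissibility conditions fills in the only detail the paper leaves implicit.
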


\begin{corollary}
Suppose $A_n$ converges to $A$ in  generalized strong resolvent sense. Then
\be
J_n^* \E^{\I t A_n} J_n \overset{s}{\to} \E^{\I t A}, \qquad t\in\R,
\ee
and if all operators are semi-bounded by the same bound
\be
J_n^* \E^{-t A_n} J_n \overset{s}{\to} \E^{-t A}, \qquad t\ge 0.
\ee
The last claim also holds in norm, provided we have generalized norm resolvent convergence.
\end{corollary}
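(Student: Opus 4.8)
The plan is to read off all three assertions directly from Theorem~\ref{fnorm} by applying it to the appropriate bounded continuous functions; once these functions are identified, essentially no further work is needed, since the functional calculus gives $\E^{\I t A_n}=f_t(A_n)$ with $f_t(\lam)=\E^{\I t\lam}$ and $\E^{-t A_n}=g_t(A_n)$ with $g_t(\lam)=\E^{-t\lam}$.

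First I would treat the unitary group. For fixed $t\in\R$ the function $f_t(\lam)=\E^{\I t\lam}$ is bounded and continuous on all of $\R$, hence on $\Sigma$. Since we only assume generalized \emph{strong} resolvent convergence, I apply the strong part of Theorem~\ref{fnorm}, which is valid for \emph{any} bounded continuous $f$, and obtain $\slim_{n\to\infty}J_n^*\E^{\I t A_n}J_n=\E^{\I t A}$ at once.

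Next I would handle the semigroup under the common semibound, say $A_n,A\ge c$ for all $n$, so that $\Sigma\subseteq[c,\infty)$. For $t\ge0$ the function $g_t(\lam)=\E^{-t\lam}$ is then bounded by $\E^{-tc}$ and continuous on $\Sigma$. For the strong statement I again invoke the strong part of Theorem~\ref{fnorm}. For the norm statement I invoke its norm part, which in addition requires $\lim_{\lam\to-\infty}g_t(\lam)=\lim_{\lam\to+\infty}g_t(\lam)$. Because $\Sigma$ is bounded below, only the behaviour at $+\infty$ is constrained, where $g_t\to0$ for $t>0$ (and $g_0\equiv1$). Since the operators $g_t(A_n)$ and $g_t(A)$ depend only on the values of $g_t$ on $\Sigma$, I may replace $g_t$ by any bounded continuous extension to $\R$ that also tends to $0$ as $\lam\to-\infty$; this meets the hypothesis of the theorem without altering the operators, and the norm part then yields the claim.

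The one genuinely conceptual point — and the reason the corollary is phrased asymmetrically — is that the oscillating function $\E^{\I t\lam}$ possesses no limits at $\pm\infty$, so the norm part of Theorem~\ref{fnorm} simply cannot be applied to the group; norm convergence is available only for the decaying semigroup. The role of the common lower bound is precisely to render $g_t$ bounded on $\Sigma$: without it $\E^{-t\lam}$ would be unbounded and $\E^{-tA_n}$ would not even be defined. I therefore expect no substantive obstacle, the remaining content being a direct citation of the two halves of Theorem~\ref{fnorm}.
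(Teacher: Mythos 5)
Your proposal is correct and follows exactly the route the paper intends: the corollary is stated as an immediate consequence of Theorem~\ref{fnorm}, obtained by applying its strong part to $\E^{\I t\lam}$ and to $\E^{-t\lam}$ (bounded on $\Sigma$ by the common semibound), and its norm part to $\E^{-t\lam}$, where the limit condition at $\pm\infty$ is harmless since $\Sigma$ is bounded below. Your observation that one may modify the function off $\Sigma$ to satisfy the two-sided limit hypothesis without changing the operators is precisely the small point that makes the norm statement rigorous.
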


Next we turn to criteria to verify resolvent convergence. The first one deals with the case
when the difference is relatively bounded.

\begin{lemma}
Let $A_n$, $A$ be self-adjoint operators with $J_n \db(A) \subseteq \db(A_n)$. Then
$A_n$ converges to $A$ in  generalized  norm resolvent sense if there
are non-negative sequences $a_n$ and $b_n$ converging to zero such that
\be
\|(A_n J_n -J_n A)\psi\| \le a_n \|A \psi\| + b_n \|\psi\|, \quad \psi\in\db(A).
\ee
\end{lemma}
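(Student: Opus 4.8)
The plan is to verify the equivalent criterion \eqref{defnrc2} for a single conveniently chosen spectral point. Since $A_n$ and $A$ are self-adjoint, all the spectra involved are real, so $\Sigma\subseteq\R$ and every $z$ with $\im z\neq 0$ lies in $\Gamma$; I would simply take $z=\I$. By the standing assumption \eqref{Jcon} and the equivalence of \eqref{defnrc} with \eqref{defnrc2}, it then suffices to show $\|R_{A_n}(z)J_n-J_nR_A(z)\|\to 0$.

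The key step is a resolvent-type identity adapted to the embeddings. Because $\ran R_A(z)=\db(A)$ and $J_n\db(A)\subseteq\db(A_n)$ by hypothesis, for every $\psi\in\hr$ the vector $J_nR_A(z)\psi$ lies in $\db(A_n)$, so the commutator $(A_nJ_n-J_nA)R_A(z)\psi$ is well-defined. Inserting $(A_n-z)R_{A_n}(z)=\id$ and $R_A(z)(A-z)=\id$ on $\db(A)$, and using $(A_n-z)J_n-J_n(A-z)=A_nJ_n-J_nA$, a direct computation gives
\[
R_{A_n}(z)J_n-J_nR_A(z)=-R_{A_n}(z)(A_nJ_n-J_nA)R_A(z),
\]
where the right-hand side is to be read as a composition of maps $\hr\to\hr_n$.

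It remains to estimate the norm. Self-adjointness gives $\|R_{A_n}(z)\|\le|\im z|^{-1}$, while the relative bound applied to $R_A(z)\psi\in\db(A)$ yields
\[
\|(A_nJ_n-J_nA)R_A(z)\psi\|\le a_n\|AR_A(z)\psi\|+b_n\|R_A(z)\psi\|.
\]
Since $AR_A(z)=\id+zR_A(z)$ is bounded with norm at most $1+|z|\,|\im z|^{-1}$, and $\|R_A(z)\|\le|\im z|^{-1}$, combining these estimates produces a bound of the form $\|R_{A_n}(z)J_n-J_nR_A(z)\|\le C_z(a_n+b_n)$ with a constant $C_z$ depending only on $z$. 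As $a_n,b_n\to 0$, the right-hand side tends to zero, which establishes \eqref{defnrc2} and hence generalized norm resolvent convergence.

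The computation itself is routine; the one point that genuinely requires care is the domain bookkeeping in the identity. Specifically, $J_n\db(A)\subseteq\db(A_n)$ is exactly what guarantees that $(A_nJ_n-J_nA)R_A(z)$ is everywhere defined, and it is the relative bound together with the boundedness of $AR_A(z)$ that turns this (otherwise unbounded) middle factor into a bounded operator with the stated decay. This is the only place where the hypothesis is used essentially, so I would state the identity with its domains made explicit before invoking the estimate.
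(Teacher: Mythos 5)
Your proposal is correct and follows essentially the same route as the paper: the intertwined second resolvent formula $R_{A_n}(z)J_n - J_nR_A(z) = R_{A_n}(z)(J_nA - A_nJ_n)R_A(z)$, justified by $J_n\db(A)\subseteq\db(A_n)$, followed by the relative bound applied to $R_A(z)\psi$ and the boundedness of $AR_A(z)$. The only cosmetic difference is that the paper evaluates at $z=\I$ and uses the sharp self-adjoint bounds $\|R_A(\I)\|\le 1$, $\|AR_A(\I)\|\le 1$ to get the clean estimate $a_n+b_n$, whereas you carry a generic constant $C_z$; both yield the same conclusion.
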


\begin{proof}
From the second resolvent formula
\be\label{eqsndrf}
R_{A_n}(z) J_n - J_n R_A(z) = R_{A_n}(z) (J_n A-A_n J_n) R_A(z), \quad z\in\Gamma,
\ee
we infer
\begin{align*}
\|(R_{A_n}(\I) J_n - J_n R_A(\I))\psi \| &\le \|R_{A_n}(\I)\| \Big(
a_n\| A R_A(\I) \psi\| + b_n \| R_A(\I) \psi\|\Big)\\
&\le ( a_n + b_n) \|\psi\|
\end{align*}
and hence $\|R_{A_n}(\I) J_n - J_n R_A(\I)\| \le a_n + b_n \to 0$.
\end{proof}

The second deals with the case when the difference is relatively form bounded.

Let $A\in\clo(\hr)$ be a closed linear operator. Then the expression $\spr{\psi}{A\psi}$ is called the quadratic form,
$$
\textbf{q}_A(\psi)= \spr{\psi}{A\psi},\quad \psi\in\db(A),
$$
associated to $A$. The operator $A$ is called nonnegative if for all $\psi\in\db(A)$ we have $\textbf{q}_A(\psi)\geq 0$. For a nonnegative operator $A$, introduce the scalar product
$$
\spr{\varphi}{\psi}_A:= \spr{\varphi}{(A+1)\psi},\quad \forall\;\varphi,\psi\in\db(A).
$$
Let $\hr_A$ be the completion of $\db(A)$ with respect to the above scalar product. It is easy to verify that $\db(A)\subseteq\hr_A\subseteq\hr$.
Clearly, the quadratic form $\textbf{q}_A$ can be extended to every $\psi\in\hr_A$ by setting
$$
\textbf{q}_A(\psi)= \spr{\psi}{\psi}_A-\|\psi\|^2,\quad \psi\in\qb(A)=\hr_A.
$$
The set $\qb(A)$ is also called the form domain of $A$. For more details, see \cite{tschroe}.

\begin{lemma}\label{q-nrc}
Suppose $A, A_n \ge \gam$ are bounded from below with $J_n \qb(A) \subseteq \qb(A_n)$. Then
$A_n$ converges to $A$ in   generalized  norm resolvent sense if there
are non-negative sequences $a_n$ and $b_n$ converging to zero such that
\[
|\textbf{q}_A(\psi) - \textbf{q}_{A_n}(J_n \psi)| \le a_n \textbf{q}_{A-\gam}(\psi) + b_n \|\psi\|^2, \qquad \psi\in\qb(A),
\]
where $\textbf{q}_A$ and $\textbf{q}_{A_n}$ are the associated quadratic forms.
\end{lemma}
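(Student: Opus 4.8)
The plan is to transport everything to the fixed space $\hr$ via the square roots $B:=(A+1)^{1/2}$ and $B_n:=(A_n+1)^{1/2}$ and to work at the point $z=\gamma-1$, which lies in $\Gamma$ since $\Sigma\subseteq[\gamma,\infty)$. First I would reduce to $\gam=0$: replacing $A,A_n$ by $A-\gam,A_n-\gam$ does not affect resolvent convergence, and since $\|J_n^*J_n-\id\|=:\delta_n\to 0$ one checks that the hypothesis becomes the same inequality with $\gam=0$ and $b_n$ replaced by $b_n+|\gam|\delta_n\to 0$. So assume $A,A_n\ge 0$, and recall $\qb(A)=\db(B)$, $\|\psi\|_A^2=\|B\psi\|^2=\textbf{q}_A(\psi)+\|\psi\|^2$ and $\|B^{-1}\|\le 1$.

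Next I would record the one useful consequence of the form bound. Writing $\spr{\cdot}{\cdot}_{A_n}=\textbf{q}_{A_n}(\cdot,\cdot)+\spr{\cdot}{\cdot}$, the quadratic form $\tau_n(\psi):=\|J_n\psi\|_{A_n}^2-\|\psi\|_A^2$ on $\qb(A)$ splits as $[\textbf{q}_{A_n}(J_n\psi)-\textbf{q}_A(\psi)]+[\|J_n\psi\|^2-\|\psi\|^2]$, so the hypothesis together with $\delta_n$ yields $|\tau_n(\psi)|\le c_n\|\psi\|_A^2$ with $c_n:=a_n+b_n+\delta_n\to 0$. Now introduce $T_n:=B_nJ_nB^{-1}\in\mathcal{B}(\hr,\hr_n)$, which is well defined and bounded because $J_n\qb(A)\subseteq\qb(A_n)=\db(B_n)$. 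Putting $\chi=B\psi$, a direct computation gives $\spr{\chi}{(T_n^*T_n-\id)\chi}=\|B_nJ_n\psi\|^2-\|\chi\|^2=\tau_n(\psi)$, whence, $T_n^*T_n-\id$ being self-adjoint, $\|T_n^*T_n-\id\|\le c_n\to 0$. Thus $T_n$ is an asymptotic isometry.

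The decisive step — and the one I expect to be the real obstacle — is to upgrade $T_n$ from an asymptotic isometry to an asymptotic unitary. Here one uses that in the norm setting $J_n$ is eventually a bijection $\hr\to\hr_n$ and that it identifies the form domains, so that $\ran(T_n)=B_nJ_n\qb(A)=B_n\qb(A_n)=\hr_n$; hence $T_n$ is boundedly invertible with $T_n^{-1}=BJ_n^{-1}B_n^{-1}$ (and $\|T_nT_n^*-\id\|=\|T_n^*T_n-\id\|\le c_n$ via the polar decomposition). I would stress that surjectivity of $T_n$ is exactly what must be extracted from the hypotheses: the mere inclusion $J_n\qb(A)\subseteq\qb(A_n)$ is not enough — taking $\hr_n=\hr$, $J_n=\id$, $A$ the Dirichlet and $A_n$ the Neumann Laplacian (whose forms agree on $\qb(A)$) would make the left-hand side vanish while no resolvent convergence holds — so the matching $J_n\qb(A)=\qb(A_n)$ of form domains is essential and must be built into (or deduced in) this step.

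Granting this, the endgame is purely algebraic. Set $\hat R_n:=B^{-1}(T_n^*T_n)^{-1}B^{-1}$. On one hand $\|\hat R_n-R_A(-1)\|=\|B^{-1}[(T_n^*T_n)^{-1}-\id]B^{-1}\|\le\|(T_n^*T_n)^{-1}\|\,\|\id-T_n^*T_n\|\le c_n/(1-c_n)\to 0$. On the other hand, inserting $T_n^{-1}=BJ_n^{-1}B_n^{-1}$ and $(T_n^*)^{-1}=B_n^{-1}(J_n^*)^{-1}B$ into $(T_n^*T_n)^{-1}=T_n^{-1}(T_n^*)^{-1}$ collapses the square roots and gives $\hat R_n=J_n^{-1}R_{A_n}(-1)(J_n^{-1})^*$. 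Finally I would compare this with the object of interest through the telescoping identity
\[
J_n^* R_{A_n}(-1) J_n - \hat R_n = \big(J_n^* - J_n^{-1}\big)R_{A_n}(-1)J_n + J_n^{-1}R_{A_n}(-1)\big(J_n - (J_n^{-1})^*\big),
\]
which is $o(1)$ in norm because $\|J_n^*-J_n^{-1}\|\to 0$, $\|R_{A_n}(-1)\|\le 1$, and $\|J_n\|,\|J_n^{-1}\|\to 1$ by \eqref{Jcon}. Combining the two estimates yields $\|J_n^* R_{A_n}(-1) J_n - R_A(-1)\|\to 0$, i.e.\ generalized norm resolvent convergence at the point $z=\gam-1\in\Gamma$, and hence at every $z\in\Gamma$ by the corollary.
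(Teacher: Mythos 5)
Your argument is correct, and your diagnosis of the obstacle is not a defect of your proof but of the statement itself: the Dirichlet/Neumann example you give satisfies every hypothesis of Lemma~\ref{q-nrc} (take $\hr_n=\hr=L^2(0,1)$, $J_n=\id$, $a_n=b_n=0$; the two forms agree on $\qb(A)=H_0^1$, and $\qb(A)\subseteq\qb(A_n)=H^1$), while the conclusion fails, since the Neumann and Dirichlet resolvents are two distinct fixed operators. So the lemma as printed is false, and your strengthened hypothesis $J_n\qb(A)=\qb(A_n)$ (eventually) --- exactly what makes $T_n=B_nJ_nB^{-1}$ surjective --- is genuinely needed. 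Granting it, every step of your proof checks out: the reduction to $\gam=0$ costs only $b_n\mapsto b_n+|\gam|\,\|J_n^*J_n-\id\|$; the form bound plus self-adjointness of $T_n^*T_n-\id$ gives $\|T_n^*T_n-\id\|\le a_n+b_n+\|J_n^*J_n-\id\|\to0$; bijectivity and the open mapping theorem give $T_n^{-1}=BJ_n^{-1}B_n^{-1}$; and the identity $B^{-1}(T_n^*T_n)^{-1}B^{-1}=J_n^{-1}R_{A_n}(-1)(J_n^*)^{-1}$ together with the telescoping estimate and $\|J_n^*-J_n^{-1}\|\to0$ yields \eqref{defnrc} at $z=\gam-1\in\Gamma$.

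Comparing with the paper: its proof rests on the same tacit assumption, hidden in the phrase ``$B_n:=J_n^*A_nJ_n$ is the self-adjoint operator associated with the quadratic form $\textbf{q}_{A_n}(J_n\cdot)$''. These are in general two different operators: the KLMN operator furnished by \cite[Theorem~6.31]{tschroe} has form domain $\qb(A)$, whereas the operator product $J_n^*A_nJ_n$ (self-adjoint on $J_n^{-1}\db(A_n)$, since $J_n$ is eventually boundedly invertible) has form domain $J_n^{-1}\qb(A_n)$; they coincide precisely under your extra hypothesis. The first half of the paper's proof (the resolvent formula and $\|R_{B_n}(-\lam)-R_A(-\lam)\|\to0$) concerns the former, while the second half (which needs $J_n\db(B_n)\subseteq\db(A_n)$ for $(A_n+\lam)J_nR_{B_n}(-\lam)$ to make sense, and uses $J_n^*(A_n+\lam)J_n=B_n+\lam J_n^*J_n$) concerns the latter; in your example the former is the Dirichlet and the latter the Neumann Laplacian, which is exactly how the contradiction arises. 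So your blind attempt improves on the paper: it proves the corrected statement and shows the correction is unavoidable. Technically the two proofs are parallel pull-back arguments; the paper outsources the key estimate to the KLMN machinery of \cite{tschroe}, while yours is self-contained via square roots and asymptotic isometries. Finally, the paper's application is unaffected: in Theorem~\ref{slnrc} the coefficients are uniformly comparable, hence the form domains of $T_n$ and $T$ coincide as sets and the strengthened lemma applies there.
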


\begin{proof}
We can assume $a_n, b_n <1$ without loss of generality. Applying \cite[Theorem 6.31]{tschroe} with $\textbf{q}(\psi)=\textbf{q}_{A_n}(J_n\psi)-\textbf{q}_A(\psi)$ for any $\psi\in\qb(A)$ we have 
$$\|C_{\textbf{q}}(\lambda)\|\leq \max\left( a_n, \frac{b_n}{\lambda+\gam}\right)\rightarrow 0,\quad \lam>-\gam,$$
and
$$R_{B_n}(-\lam)=R_A(-\lam)^{\frac{1}{2}}(\id + C_{\textbf{q}}(\lam))^{-1}R_A(-\lam)^{\frac{1}{2}},$$
where $C_{\textbf{q}}(\lam)$  is the  bounded operator corresponding to the quadratic form $\textbf{q}(R_A(-\lam)^{\frac{1}{2}}\cdot)$ and $B_n:=J_n^*A_n J_n$ is the self-adjoint operator associated with the quadratic form $\textbf{q}_{A_n}(J_n\cdot)$.
Hence we have
\begin{align*}
\|R_{B_n}(-\lam)-R_{A}(-\lam)\| &= \|R_A(-\lam)^{\frac{1}{2}}\left((1+C_{\textbf{q}}(\lam))^{-1}-1 \right) R_A(-\lam)^{\frac{1}{2}}\|\\
&\leq \|R_A(-\lam)^{\frac{1}{2}}\|^2\|(1+C_{\textbf{q}}(\lam))^{-1}-1\|\rightarrow 0.
\end{align*}
Moreover, we have
\begin{align*}
& \| J_n R_{B_n}(-\lam) - R_{A_n}(-\lam) J_n \| = \| R_{A_n}(-\lam)(A_n+\lam) (J_n R_{B_n}(-\lam) - R_{A_n}(-\lam) J_n) \|\\
& \qquad \le \| R_{A_n}(-\lam)\| \| (A_n + \lam) J_n R_{B_n}(-\lam) - J_n \|\\
& \qquad \le \| R_{A_n}(-\lam)\| \|(J_n^*)^{-1} \| \| (B_n +\lam +\lam(J_n^* J_n - \id)) R_{B_n}(-\lam) - J_n^* J_n\|\\
& \qquad \le \| R_{A_n}(-\lam)\| \|(J_n^*)^{-1} \| \| (\lam R_{B_n}(-\lam) - 1)\| \|J_n^* J_n - \id\|
\to 0, \qedhere
\end{align*}
from which the claim follows.
\end{proof}

Next, we establish that strong convergence of the operators implies strong resolvent convergence.

Let $A$ be a symmetric operator with dense domain $\db(A)$ and $\overline{A}$ is its only self-adjoint extension. Then $A$ is called essentially self-adjoint and $\db(A)$ is called a core for $\overline{A}$. For more details, see \cite{tschroe}.

\begin{lemma}
The sequence $A_n$ converges to $A$ in generalized strong resolvent sense if there
is a core $\db_0$ of $A$ such that for every $\psi\in\db_0$
we have $J_n \psi\in\db(A_n)$ for $n$ sufficiently large and
$(A_n J_n - J_n A) \psi \to 0$.
\end{lemma}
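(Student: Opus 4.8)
The plan is to verify the sufficient condition \eqref{defsrc2} at the single point $z=\I$, which lies in $\Gamma$ since all operators are self-adjoint and hence $\Sigma\subseteq\R$; by \eqref{Jcos} this then yields \eqref{defsrc} and thus generalized strong resolvent convergence. Concretely, I would prove that $T_n\psi\to 0$ for every $\psi\in\hr$, where $T_n:=R_{A_n}(\I)J_n - J_n R_A(\I)$. Since $\|R_{A_n}(\I)\|\le 1$, $\|R_A(\I)\|\le 1$ and $\|J_n\|\le C$, the family $T_n$ is uniformly bounded, $\|T_n\|\le 2C$, so by a standard density argument it suffices to establish $T_n\varphi\to 0$ on a dense subset of $\hr$.

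The natural dense subset is $(A-\I)\db_0$. First I would observe that this set is indeed dense: since $\db_0$ is a core for $A$, every $\psi\in\db(A)$ is approximated by $\psi_k\in\db_0$ with $A\psi_k\to A\psi$, hence $(A-\I)\psi_k\to(A-\I)\psi$, and as $A$ is self-adjoint $(A-\I)\db(A)=\hr$; therefore $(A-\I)\db_0$ is dense in $\hr$.

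The key computation is a pointwise version of the second resolvent formula \eqref{eqsndrf}. For $\psi\in\db_0$ and $\varphi:=(A-\I)\psi$ we have $R_A(\I)\varphi=\psi$, and since $J_n\psi\in\db(A_n)$ for $n$ large we may insert $J_n\psi=R_{A_n}(\I)(A_n-\I)J_n\psi$ to obtain
\[
T_n\varphi = R_{A_n}(\I)\bigl(J_n(A-\I)\psi-(A_n-\I)J_n\psi\bigr) = -R_{A_n}(\I)(A_n J_n-J_n A)\psi,
\]
where the terms $\pm\I J_n\psi$ cancel. Consequently $\|T_n\varphi\|\le\|(A_n J_n-J_n A)\psi\|\to 0$ by hypothesis, which is exactly the convergence needed on the dense set. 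Passing back to arbitrary $\varphi\in\hr$ via the uniform bound $\|T_n\|\le 2C$ completes the argument.

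The point requiring care --- and the only place where the present hypotheses differ from those of the relatively bounded lemma --- is that we do \emph{not} assume $J_n\db(A)\subseteq\db(A_n)$, so the global identity \eqref{eqsndrf} is unavailable and cannot be applied to an arbitrary $\varphi\in\hr$. The remedy is precisely the structure above: the second resolvent manipulation is carried out only for $\psi$ in the core $\db_0$, where $J_n\psi\in\db(A_n)$ is guaranteed for large $n$, and the extension to all of $\hr$ is handled by density together with the uniform bound. I do not expect any further obstacle; the remaining estimate is routine.
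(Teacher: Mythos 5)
Your proposal is correct and follows essentially the same route as the paper: both verify \eqref{defsrc2} at $z=\I$ by applying the second resolvent identity pointwise to $\varphi=(A-\I)\psi$ with $\psi$ in the core (where $J_n\psi\in\db(A_n)$ justifies the manipulation), obtaining $-R_{A_n}(\I)(A_nJ_n-J_nA)\psi\to 0$ on the dense set $(A-\I)\db_0$, and then extend by uniform boundedness — the paper compresses this last step into a citation of \cite[Lemma~1.14]{tschroe}. Your explicit remark that the global identity \eqref{eqsndrf} is unavailable without $J_n\db(A)\subseteq\db(A_n)$, and that only the pointwise version on the core is used, is a faithful (and slightly more careful) account of what the paper's proof actually does.
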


\begin{proof}
Using the second resolvent formula \eqref{eqsndrf}, we have
\[
(R_{A_n}(\I) J_n - J_n R_A(\I)) \phi = -R_{A_n}(\I) (A_n J_n - J_n A) \psi\to 0
\]
for $\phi = (A-\I) \psi$ with $\psi\in\db_0$ which is dense, since $\db_0$ is a core.
The rest follows from \cite[Lemma~1.14]{tschroe}.
\end{proof}

Finally, we mention that generalized weak resolvent convergence, is equivalent to the generalized strong resolvent convergence.

\begin{lemma}
Let $A_n$, $A$ be self-adjoint operators.
Suppose \eqref{Jcos} and $\|J_n J_n^* - \id \|\to 0$. If
$J_n^* R_{A_n}(z) J_n$ converges to $R_A(z)$ weakly for some $z\in\Gamma\setminus\R$,
then $R_{A_n}(z)$ converges to $R_A(z)$ in generalized strong resolvent sense.
\end{lemma}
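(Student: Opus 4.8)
The plan is to upgrade the assumed weak convergence to strong convergence by means of the standard Hilbert space fact that $x_n\rightharpoonup x$ together with $\|x_n\|\to\|x\|$ forces $x_n\to x$ in norm. Write $R_n:=J_n^* R_{A_n}(z) J_n$ and $R:=R_A(z)$, so that by hypothesis $R_n\rightharpoonup R$ weakly. Since taking adjoints preserves weak operator convergence and $R_{A_n}(z)^*=R_{A_n}(\bar z)$, $R_A(z)^*=R_A(\bar z)$ (all operators being self-adjoint), we automatically also obtain $J_n^* R_{A_n}(\bar z) J_n\rightharpoonup R_A(\bar z)$. It therefore suffices to prove that $\|R_n\psi\|\to\|R\psi\|$ for every $\psi\in\hr$; once this is established, \eqref{defsrc} holds at the given $z\in\Gamma\setminus\R$, and together with the assumed \eqref{Jcos} this is precisely generalized strong resolvent convergence.

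Next I would compute the relevant norms via the first resolvent identity. For the self-adjoint operator $A_n$ and $z\notin\R$ one has $R_{A_n}(z)^* R_{A_n}(z)=R_{A_n}(\bar z) R_{A_n}(z)=\frac{1}{z-\bar z}\bigl(R_{A_n}(z)-R_{A_n}(\bar z)\bigr)$. Hence, writing $\phi_n:=J_n\psi$,
\[
\|R_{A_n}(z)\phi_n\|^2 = \spr{R_{A_n}(z)^* R_{A_n}(z)\phi_n}{\phi_n} = \frac{1}{z-\bar z}\Bigl(\spr{R_{A_n}(z)\phi_n}{\phi_n} - \spr{R_{A_n}(\bar z)\phi_n}{\phi_n}\Bigr).
\]
Rewriting each inner product as $\spr{R_{A_n}(z)\phi_n}{\phi_n}=\spr{J_n^* R_{A_n}(z) J_n\psi}{\psi}$ (and likewise at $\bar z$) and invoking the weak convergence at both $z$ and $\bar z$ gives
\[
\|R_{A_n}(z)\phi_n\|^2 \to \frac{1}{z-\bar z}\Bigl(\spr{R_A(z)\psi}{\psi} - \spr{R_A(\bar z)\psi}{\psi}\Bigr) = \|R_A(z)\psi\|^2,
\]
where the last equality is the same resolvent identity applied to $A$.

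Finally I would pass from $R_{A_n}(z) J_n\psi$ to $R_n\psi=J_n^* R_{A_n}(z) J_n\psi$. Setting $v_n:=R_{A_n}(z) J_n\psi$, we have $\|J_n^* v_n\|^2=\spr{J_n J_n^* v_n}{v_n}=\|v_n\|^2+\spr{(J_n J_n^*-\id)v_n}{v_n}$, and since $\|R_{A_n}(z)\|\le|\im z|^{-1}$ and $\|J_n\|\le C$ the sequence $\|v_n\|$ is bounded; together with $\|J_n J_n^*-\id\|\to 0$ the correction term is $o(1)$, so $\|R_n\psi\|\to\|R_A(z)\psi\|$. This is exactly the norm convergence required, and combined with $R_n\psi\rightharpoonup R_A(z)\psi$ it yields the strong limit. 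The main obstacle — and the only place the extra hypothesis $\|J_n J_n^*-\id\|\to 0$ enters — is precisely this last replacement of the $\hr$-norm by the $\hr_n$-norm: without control on $J_n J_n^*$ the limit of the norms $\|R_n\psi\|$ and the quadratic-form computation at $z,\bar z$ would no longer match, which is why \eqref{Jcos} alone does not suffice for the weak-to-strong upgrade.
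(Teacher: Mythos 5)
Your proposal is correct and takes essentially the same route as the paper: both pass to the adjoint resolvent at $\bar z$ via weak convergence, use the first resolvent identity to show that the relevant norms converge to $\|R_A(z)\psi\|$, control the insertion of $J_nJ_n^*$ through the hypothesis $\|J_nJ_n^*-\id\|\to 0$, and conclude with the standard fact that weak convergence together with convergence of norms implies strong convergence. The only difference is organizational—you compute $\|R_{A_n}(z)J_n\psi\|$ first and correct by the $J_nJ_n^*$ term at the end, while the paper splits the same two error terms inside one estimate.
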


\begin{proof}
By $J_n^*R_{A_n}(z)J_n \rightharpoonup R_A(z)$ we also have
$J_n^*R_{A_n}(z)^*J_n \rightharpoonup R_A(z)^*$ and thus by the first resolvent formula
\begin{align*}
& \left|\|J_n^*R_{A_n}(z)J_n \psi\|^2 - \|R_A(z) \psi\|^2\right| \\
&=\left|\spr{\psi}{J_n^*R_{A_n}(z^*)J_nJ_n^*R_{A_n}(z)J_n\psi - R_A(z^*) R_A(z) \psi}\right|\\
&\leq |\spr{\psi}{J_n^*R_{A_n}(z^*)(J_nJ_n^*-\id)R_{A_n}(z)J_n\psi}|\\
&\qquad +|\spr{\psi}{(J_n^*R_{A_n}(z^*)R_{A_n}(z)J_n- R_A(z^*) R_A(z))\psi}|\\
&\leq \|J_n^*\| \|R_{A_n}(z^*)\| |\spr{\psi}{(J_nJ_n^*-\id)R_{A_n}(z)J_n\psi}| \\
&\qquad + \frac{1}{|z-z^*|} \left|\spr{\psi}{(J_n^*R_{A_n}(z)J_n)\psi - (J_n^*R_{A_n}(z)^*J_n) \psi} \right|\\
&\to 0.
\end{align*}
Together with $J_n^*R_{A_n}(z)J_n \rightharpoonup R_A(z)$ we have
$J_n^*R_{A_n}(z)J_n\psi \to R_A(z) \psi$ by virtue of \cite[Lemma~1.12 (iv)]{tschroe}.
\end{proof}

Now we turn to convergence of spectra. As usual, we denote by $\lim\limits_{n\to\infty} S_n$ the set of all $\lam$
for which there is a sequence $\lam_n\in S_n$ converging to $\lam$.

\begin{theorem}\label{spe-cov}
Let $A_n$ and $A$ be self-adjoint operators. If $A_n$ converges to $A$
in  generalized strong resolvent sense, we have $\sig(A) \subseteq \lim_{n\to\infty} \sig(A_n)$.

If $A_n$ converges to $A$ in  generalized  norm resolvent sense, we have $\sig(A) =
\lim\limits_{n\to\infty} \sig(A_n)$.
Moreover, for every $\lam\in\rho(A)$ there is a
neighborhood which is eventually contained in $\rho(A_n)$.
\end{theorem}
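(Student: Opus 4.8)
The plan is to obtain the inclusion $\sig(A)\subseteq\lim_n\sig(A_n)$ from the functional calculus of Theorem~\ref{fnorm}, and to deduce the reverse inclusion in the norm case directly from the \emph{Moreover} assertion. Since generalized norm resolvent convergence implies generalized strong resolvent convergence (as \eqref{Jcon} gives \eqref{Jcos} and \eqref{defnrc} gives \eqref{defsrc}), the first inclusion holds in both cases. To prove it, fix $\lam\in\sig(A)$ and, for $\eps>0$, apply Theorem~\ref{fnorm} to the tent function $f_\eps(\mu)=\max(0,1-\eps^{-1}|\mu-\lam|)$, which is bounded and continuous. As $\lam\in\sig(A)$ we have $f_\eps(A)\neq 0$, so there is a $\psi$ with $f_\eps(A)\psi\neq 0$; strong convergence forces $\|J_n^*f_\eps(A_n)J_n\psi\|\to\|f_\eps(A)\psi\|>0$ and hence $f_\eps(A_n)\neq 0$ for large $n$. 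Since $\{f_\eps\neq 0\}=(\lam-\eps,\lam+\eps)$, this gives $\sig(A_n)\cap(\lam-\eps,\lam+\eps)\neq\emptyset$ eventually, and a diagonal choice over $\eps=1/k$ produces $\lam_n\in\sig(A_n)$ with $\lam_n\to\lam$.

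It then suffices to prove the \emph{Moreover} statement, as it immediately yields $\lim_n\sig(A_n)\subseteq\sig(A)$: a sequence $\lam_n\in\sig(A_n)$ with $\lam_n\to\lam\in\rho(A)$ would eventually enter the excluded neighborhood. I would fix the non-real base point $z_0=\I$, which lies in $\Gamma$ since all spectra are real, so that \eqref{defnrc2} and the invertibility of $J_n$ furnished by \eqref{Jcon} give
\[
R_{A_n}(\I)=J_n R_A(\I) J_n^{-1}+F_n,\qquad \|F_n\|\to 0 .
\]
Here $J_n R_A(\I)J_n^{-1}$ is similar to $R_A(\I)$, hence has the fixed spectrum $\sig(R_A(\I))$, so $R_{A_n}(\I)$ is a norm-small perturbation of an operator with this spectrum.

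Finally, given $\lam\in\rho(A)$, put $w=(\lam-\I)^{-1}\neq 0$; the spectral mapping theorem for resolvents gives $w\in\rho(R_A(\I))$, so $\delta_0:=\mathrm{dist}(w,\sig(R_A(\I)))>0$. For $n$ large enough that $\|F_n\|<\delta_0/2$, Neumann-series stability of invertibility keeps the disk $\{|\zeta-w|<\delta_0/2\}$ inside $\rho(R_{A_n}(\I))$, and transporting this disk through the homeomorphism $\zeta\mapsto\I+\zeta^{-1}$ (shrinking it to stay off $\zeta=0$) produces a fixed neighborhood of $\lam$ contained in $\rho(A_n)$ for all large $n$. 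Combined with the first inclusion this gives $\sig(A)=\lim_n\sig(A_n)$. I expect the only real obstacle to be this last step: because $\lam\in\rho(A)$ may lie in $\Sigma$ as a limit of points of $\sig(A_n)$, one cannot use resolvent convergence at $z=\lam$ itself, and the device of routing the spectral gap through the fixed regular point $\I$ via similarity and perturbation is exactly what is needed to circumvent this.
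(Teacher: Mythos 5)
Your proof is correct, and while its first half follows the paper's own route, the second half is genuinely different. For the inclusion $\sig(A)\subseteq\lim_{n\to\infty}\sig(A_n)$ you argue exactly as the paper does: both proofs rest on the strong-convergence part of Theorem~\ref{fnorm} applied to a compactly supported bump/tent function concentrated at $\lam\in\sig(A)$ (the paper argues by contradiction, you argue directly with a diagonal extraction over $\eps=1/k$; this is purely cosmetic). For the \emph{Moreover} statement and the reverse inclusion, however, the paper stays inside the functional calculus: it takes $f$ supported in $(\lam-2\eps,\lam+2\eps)$ with $f(A)=0$, upgrades $J_n^* f(A_n)J_n\to 0$ to $f(A_n)\to 0$ in norm, and then uses $f(A_n)P_{A_n}((\lam-\eps,\lam+\eps))=P_{A_n}((\lam-\eps,\lam+\eps))$ together with the fact that a projection has norm $0$ or $1$ to conclude $P_{A_n}((\lam-\eps,\lam+\eps))=0$ eventually. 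You instead run classical Kato-style perturbation theory at the fixed point $z=\I$: writing $R_{A_n}(\I)=J_n R_A(\I)J_n^{-1}+F_n$ with $\|F_n\|\to 0$, using that similarity preserves spectra, and pushing a spectral-gap disk around $w=(\lam-\I)^{-1}$ through the map $\zeta\mapsto \I+\zeta^{-1}$. This buys a more self-contained argument (it needs only the definition of norm resolvent convergence at one point and the eventual invertibility of $J_n$, not Theorem~\ref{fnorm}) and an explicitly quantified excluded neighborhood of $\lam$; the paper's argument is shorter given the functional-calculus machinery already in place. One detail you should spell out: $J_n R_A(\I)J_n^{-1}$ is not normal, so the bound $\|(\zeta-J_n R_A(\I)J_n^{-1})^{-1}\|\le \mathrm{dist}(\zeta,\sig(R_A(\I)))^{-1}$ is not available directly; you must insert the similarity constant, namely $\|(\zeta-J_n R_A(\I)J_n^{-1})^{-1}\|\le\|J_n\|\,\|J_n^{-1}\|\,\mathrm{dist}(\zeta,\sig(R_A(\I)))^{-1}$, which is harmless since $\|J_n\|\,\|J_n^{-1}\|\to 1$, but it means your threshold $\|F_n\|<\delta_0/2$ should read $\|F_n\|\,\|J_n\|\,\|J_n^{-1}\|<\delta_0/2$ for the Neumann series to close.
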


\begin{proof}
First, we claim $\sig(A)\subseteq \lim\limits_{n\to\infty} \sig(A_n)$. If not, then we could find a $\lam\in \sig(A)$ and some $\eps>0$ such that 
\[
\sig(A_n)\cap (\lam-2\eps,\lam+2\eps)=\emptyset.
\]
Choose a nonnegative bounded continuous function $f$ which takes the value $1$ on $(\lam-\eps,\lam+\eps)$ and vanishes outside $(\lam-2\eps,\lam+2\eps)$. Then $f(A_n)=0$. Combining with Theorem~\ref{fnorm}, we have
\[
J_n^* f(A_n)J_n\psi-f(A)\psi\rightarrow 0\quad\text{for any $\psi\in \db(A)$},
\]
and thus $f(A)\psi=0$. On the other hand, since $\lam\in \sig(A)$, there is a non-zero $\psi\in\mathrm{Ran}P_A((\lam-\eps,\lam+\eps))$ implying $f(A)\psi=\psi$, which is a contradiction.

To get the converse direction we choose $\lam\in\rho(A)$ and $\eps>0$ such that $\sig(A)\cap (\lam-2\eps,\lam+2\eps)=\emptyset$.
Choose a nonnegative bounded continuous function $f$ which takes the value $1$ on $(\lam-\eps,\lam+\eps)$ and vanishes outside $(\lam-2\eps,\lam+2\eps)$. Then $f(A)=0$ and by $J_n^* f(A_n)J_n\to 0$ we also have $f(A_n) \to 0$ and
\[
f(A_n) P_{A_n}((\lam-\eps,\lam+\eps)) = P_{A_n}((\lam-\eps,\lam+\eps)) \to 0.
\]
This implies $P_{A_n}((\lam-\eps,\lam+\eps))=0$ eventually and establishes the claim.
\end{proof}

In addition we have the following results for the convergence of spectral projections.

\begin{lemma}
Suppose $A_n$ converges in  generalized strong resolvent sense to $A$. If
$P_A(\{\lam\})=0$, then
\be
\slim_{n\to\infty} J_n^* P_{A_n}((-\infty,\lam)) J_n =
\slim_{n\to\infty} J_n^* P_{A_n}((-\infty,\lam]) J_n =
P_A((-\infty,\lam)) = P_A((-\infty,\lam]).
\ee
\end{lemma}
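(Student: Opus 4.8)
The plan is to reduce the statement about spectral projections to the functional calculus result of Theorem~\ref{fnorm} by approximating the (discontinuous) indicator functions $\chi_{(-\infty,\lam)}$ and $\chi_{(-\infty,\lam]}$ by suitable bounded continuous functions. The hypothesis $P_A(\{\lam\})=0$ is exactly what guarantees $P_A((-\infty,\lam))=P_A((-\infty,\lam])$, so the two spectral projections on the right agree, and only the single point $\lam$ can cause trouble in the limit.

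First I would fix $\eps>0$ and choose two bounded continuous functions $f_\eps,g_\eps\!:\!\R\to[0,1]$ sandwiching the indicators: take $g_\eps$ equal to $1$ on $(-\infty,\lam-\eps]$ and vanishing on $[\lam,\infty)$, and $f_\eps$ equal to $1$ on $(-\infty,\lam]$ and vanishing on $[\lam+\eps,\infty)$, so that
\[
g_\eps \le \chi_{(-\infty,\lam)} \le \chi_{(-\infty,\lam]} \le f_\eps.
\]
These functions satisfy the matching-limits condition at $\pm\infty$ required in Theorem~\ref{fnorm}, so by generalized strong resolvent convergence we have $J_n^* g_\eps(A_n) J_n \overset{s}{\to} g_\eps(A)$ and $J_n^* f_\eps(A_n) J_n \overset{s}{\to} f_\eps(A)$. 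Since the indicators are trapped between $g_\eps$ and $f_\eps$, I would sandwich the quadratic forms: for any $\psi\in\hr$ and any of the two spectral projections $P_{A_n}$, positivity and $g_\eps\le \chi \le f_\eps$ give
\[
\spr{J_n\psi}{g_\eps(A_n)J_n\psi} \le \spr{J_n\psi}{P_{A_n}J_n\psi} \le \spr{J_n\psi}{f_\eps(A_n)J_n\psi}.
\]

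The key step is then to control the gap between the two limiting bounds as $\eps\to 0$. Passing $n\to\infty$ using the established strong convergence, the liminf and limsup of $\spr{\psi}{J_n^* P_{A_n}J_n\psi}$ are trapped between $\spr{\psi}{g_\eps(A)\psi}$ and $\spr{\psi}{f_\eps(A)\psi}$. As $\eps\downarrow 0$, both $g_\eps(A)$ and $f_\eps(A)$ converge strongly to $P_A((-\infty,\lam))=P_A((-\infty,\lam])$, precisely because $P_A(\{\lam\})=0$ kills the only region where $f_\eps-g_\eps$ fails to vanish in the limit. This forces the limit of the quadratic forms $\spr{\psi}{J_n^* P_{A_n}J_n\psi}$ to exist and equal $\spr{\psi}{P_A((-\infty,\lam))\psi}$ for every $\psi$. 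I expect this $\eps\to 0$ control to be the main obstacle, since one must be careful to interchange the two limits correctly; the clean way is to first send $n\to\infty$ for fixed $\eps$ and only then let $\eps\to 0$.

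Finally, I would upgrade convergence of quadratic forms to strong operator convergence. Since each $J_n^* P_{A_n}J_n$ is self-adjoint and uniformly bounded (by $C^2$), and the limit operator $P_A((-\infty,\lam))$ is an orthogonal projection, convergence of the diagonal forms $\spr{\psi}{J_n^* P_{A_n}J_n\psi}\to\spr{\psi}{P_A\psi}$ together with the self-adjointness and idempotency of the limit is enough to conclude strong convergence: one computes $\|J_n^* P_{A_n}J_n\psi - P_A\psi\|^2$ and expands using $\spr{J_n^* P_{A_n}J_n\psi}{J_n^* P_{A_n}J_n\psi}\le \spr{\psi}{J_n^* P_{A_n}J_n\psi}$ (as $P_{A_n}$ is a projection and $\|J_n J_n^*-\id\|$ is controlled), so that all cross terms converge to $\spr{\psi}{P_A\psi}=\|P_A\psi\|^2$ and the square-norm tends to zero. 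This last maneuver parallels the weak-to-strong argument already used in the proof of Theorem~\ref{fnorm}.
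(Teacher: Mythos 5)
Your construction in the first four steps is sound and is, in substance, the same route the paper takes: the sandwich $g_\eps\le\chi_{(-\infty,\lam)}\le\chi_{(-\infty,\lam]}\le f_\eps$ combined with Theorem~\ref{fnorm} is exactly a hands-on proof that the spectral measures $\mu_{n,J_n\psi}$ converge vaguely to $\mu_\psi$, and hence that $\|P_{A_n}(\Omega)J_n\psi\|^2=\mu_{n,J_n\psi}(\Omega)\to\mu_\psi(\Omega)=\|P_A(\Omega)\psi\|^2$ for $\Omega=(-\infty,\lam)$ or $(-\infty,\lam]$ when $\mu_\psi(\{\lam\})=0$; the paper delegates precisely this step to a portmanteau-type lemma. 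One small misstatement: your $g_\eps,f_\eps$ do \emph{not} satisfy the matching-limits condition at $\pm\infty$ (they tend to $1$ at $-\infty$ and to $0$ at $+\infty$), but this is harmless because that condition is only required for the norm-convergence half of Theorem~\ref{fnorm}; the strong half, which is all you need here, requires only boundedness and continuity.

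The genuine gap is in your final weak-to-strong upgrade. The inequality $\|J_n^*P_{A_n}J_n\psi\|^2\le\spr{\psi}{J_n^*P_{A_n}J_n\psi}=\|P_{A_n}J_n\psi\|^2$ amounts to $\spr{\phi_n}{J_nJ_n^*\phi_n}\le\|\phi_n\|^2$ for $\phi_n=P_{A_n}J_n\psi$, i.e.\ to $J_nJ_n^*\le\id$, and your justification that ``$\|J_nJ_n^*-\id\|$ is controlled'' is simply not available: generalized \emph{strong} resolvent convergence assumes only \eqref{Jcos}, which gives $\|J_n\|\le C$ (with $C>1$ perfectly possible) and strong convergence of $J_n^*J_n$ on $\hr$; it says nothing at all about $J_nJ_n^*$, which acts on $\hr_n$. (That this control is genuinely extra is visible in the paper itself: its lemma on weak resolvent convergence explicitly \emph{adds} the hypothesis $\|J_nJ_n^*-\id\|\to0$ to \eqref{Jcos}.) With only $\|J_n^*\|\le C$ your expansion yields $\limsup_n\|J_n^*P_{A_n}J_n\psi-P_A\psi\|^2\le(C^2-1)\|P_A\psi\|^2$, which does not vanish. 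The step can be repaired by doing the upgrade inside $\hr_n$ instead of $\hr$: from your step 4, $\|P_{A_n}J_n\psi\|^2\to\|P_A\psi\|^2$; from \eqref{Jcos}, $\|J_nP_A\psi\|^2=\spr{P_A\psi}{J_n^*J_nP_A\psi}\to\|P_A\psi\|^2$; and by the weak convergence you get from polarization, $\spr{P_{A_n}J_n\psi}{J_nP_A\psi}=\spr{J_n^*P_{A_n}J_n\psi}{P_A\psi}\to\|P_A\psi\|^2$. Hence $\|P_{A_n}J_n\psi-J_nP_A\psi\|^2\to0$, and then
\[
\|J_n^*P_{A_n}J_n\psi-P_A\psi\|\le C\,\|P_{A_n}J_n\psi-J_nP_A\psi\|+\|(J_n^*J_n-\id)P_A\psi\|\to0 .
\]
Alternatively, one can bypass the weak-to-strong argument altogether by estimating $\|(P_{A_n}-g_\eps(A_n))J_n\psi\|^2=\int(\chi-g_\eps)^2\,\mathrm{d}\mu_{n,J_n\psi}\le\int(f_\eps-g_\eps)\,\mathrm{d}\mu_{n,J_n\psi}$, which converges (first $n\to\infty$, then $\eps\downarrow0$) to $\mu_\psi(\{\lam\})=0$, and only then applying $\|J_n^*\|\le C$.
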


\begin{proof}
By Theorem~\ref{fnorm}, the spectral measures $\mu_{n,J_n\psi}$ corresponding to $A_n$
converge vaguely to the spectral measure $\mu_\psi$ of $A$. Hence $\|P_{A_n}(\Omega) J_n \psi\|^2 = \mu_{n,J_n \psi}(\Omega)$
together with \cite[Lem.~A.34]{tschroe} implies the claim.
\end{proof}

Using $P((\lam_0,\lam_1))= P((-\infty,\lam_1)) - P((-\infty,\lam_0])$, we also obtain

\begin{lemma}\label{lemliminfproj}
Suppose $A_n$ converges in the generalized strong resolvent sense to $A$. If
$P_A(\{\lam_0\})=P_A(\{\lam_1\})=0$, then
\be
\slim_{n\to\infty} J_n^* P_{A_n}((\lam_0,\lam_1)) J_n =
\slim_{n\to\infty} J_n^* P_{A_n}([\lam_0,\lam_1]) J_n =
P_A((\lam_0,\lam_1)) = P_A([\lam_0,\lam_1]).
\ee
Moreover,
\begin{align}\nn
\dim\ran P_A((\lam_0,\lam_1)) &\le \liminf_{n\to\infty} \dim\ran J_n^* P_{A_n}((\lam_0,\lam_1)) J_n\\
&\le \limsup_{n\to\infty} \|J_n\|^2 \liminf_{n\to\infty} \dim\ran P_{A_n}((\lam_0,\lam_1)).
\end{align}
\end{lemma}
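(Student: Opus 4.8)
The plan is to prove Lemma~\ref{lemliminfproj} in two stages: first the strong-limit statement, which follows almost mechanically from the preceding lemma, and then the dimension inequalities, which are the genuinely new content. For the first stage, I would simply invoke the identity $P((\lam_0,\lam_1))= P((-\infty,\lam_1)) - P((-\infty,\lam_0])$ (and the analogous one for the closed interval), apply the previous lemma to each of the two half-line projections at the points $\lam_0$ and $\lam_1$ (which is legitimate precisely because $P_A(\{\lam_0\})=P_A(\{\lam_1\})=0$), and subtract the strong limits. Since strong limits are additive, the four-way chain of equalities in the displayed formula drops out immediately, with the equality $P_A((\lam_0,\lam_1))=P_A([\lam_0,\lam_1])$ itself coming from the no-eigenvalue hypothesis at the endpoints.

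For the lower bound on the dimension, write $\Omega:=(\lam_0,\lam_1)$ and set $P:=P_A(\Omega)$, $P_n:=P_{A_n}(\Omega)$ for brevity. The key step is the following: if $\vphi_1,\dots,\vphi_k$ is an orthonormal basis of $\ran P$ with $k\le\dim\ran P$ finite, then $J_n^* P_n J_n \vphi_j \to P\vphi_j = \vphi_j$ strongly by the first part. Hence the Gram matrix $\big(\spr{J_n^* P_n J_n \vphi_i}{\vphi_j}\big)_{i,j=1}^k$ converges to the identity matrix and is therefore eventually nonsingular; consequently the vectors $J_n^* P_n J_n \vphi_1,\dots,J_n^* P_n J_n\vphi_k$ are eventually linearly independent, all lying in $\ran J_n^* P_n J_n$. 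This forces $\dim\ran J_n^* P_n J_n \ge k$ for $n$ large, and since $k$ was arbitrary below $\dim\ran P$ we obtain the first inequality after taking $\liminf$. (When $\dim\ran P=\infty$ the same argument, run for every finite $k$, shows the $\liminf$ is infinite.)

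For the upper bound, the mechanism is reversed: I would bound $\dim\ran J_n^* P_n J_n$ in terms of $\dim\ran P_n$. Since $J_n^* P_n J_n$ factors through $\ran P_n$ (as $J_n^* P_n J_n = (J_n^* P_n)(P_n J_n)$ with the middle factor $P_n$ a projection onto $\ran P_n$), its rank cannot exceed $\rank P_n = \dim\ran P_n$; thus already $\dim\ran J_n^* P_n J_n \le \dim\ran P_n$ pointwise in $n$. The factor $\limsup_n\|J_n\|^2$ in the statement is harmless here since $\|J_n\|\to 1$ under \eqref{Jcos} gives $\limsup_n\|J_n\|^2\ge 1$, so the stated bound is in fact weaker than the crude rank estimate; taking $\liminf$ on both sides yields the second inequality.

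The main obstacle is conceptual rather than computational: it is the lower-bound step, where one must convert strong convergence of finitely many vectors into a \emph{robust} lower bound on rank. The subtlety is that strong convergence alone does not preserve linear independence in the limit unless one argues quantitatively, which is exactly what the Gram-matrix nondegeneracy supplies. Everything else—the additivity of strong limits and the factorization rank bound—is routine once this observation is in place.
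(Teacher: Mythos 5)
Your proposal is correct, and for the part that carries the real content---the dimension inequalities---it takes a genuinely different route from the paper. The strong-limit statement is handled identically (decompose via $P((\lam_0,\lam_1))=P((-\infty,\lam_1))-P((-\infty,\lam_0])$ and invoke the preceding lemma), and the paper indeed treats that part as immediate. For the inequalities, the paper argues via traces: $\dim\ran P=\tr(P)$ for projections, Fatou's lemma applied to $\tr(J_n^* P_{A_n}(\cdot) J_n)=\sum_j \spr{e_j}{J_n^* P_{A_n}(\cdot) J_n e_j}$ under strong (in fact weak) convergence gives the lower bound, and the estimate $\tr(P)\le N\|P\|$ for a positive operator of rank $N$ gives the upper bound, which is where the factor $\|J_n\|^2$ enters. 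Your Gram-matrix argument replaces Fatou: strong convergence $J_n^* P_n J_n\vphi_j\to\vphi_j$ of an orthonormal system in $\ran P_A$ makes the mixed matrix $\big(\spr{J_n^* P_n J_n\vphi_i}{\vphi_j}\big)$ converge to the identity, hence eventually invertible, forcing $\rank(J_n^* P_n J_n)\ge k$ eventually. This is more elementary (no trace bookkeeping), and it actually proves the first inequality literally as stated, with $\dim\ran J_n^* P_n J_n$ rather than the trace as the intermediate quantity---in the paper's argument the passage from $\tr(J_n^* P_n J_n)$ to its rank would itself cost a factor $\|J_n\|^2$. Likewise your upper bound $\rank(J_n^* P_n J_n)\le\rank(P_n)$ is sharper than the stated one and exposes the factor $\limsup_n\|J_n\|^2$ as superfluous. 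One small correction: \eqref{Jcos} does not give $\|J_n\|\to 1$, only $\liminf_{n\to\infty}\|J_n\|\ge 1$ (as the paper itself remarks); but that is all you need, since it still yields $\limsup_{n\to\infty}\|J_n\|^2\ge 1$.
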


\begin{proof}
It remains to show the inequalities. Since we can write
\[
\dim\ran P_{A_{n_k}}((\lam_0,\lam_1)) = \tr(P_{A_{n_k}}((\lam_0,\lam_1)))
\]
the first follows from strong convergence of the spectral projectors together with Fatou's lemma.
The second inequality follows since for every positive operator $P$ of rank $N$ we have $\tr(P) \le N \|P\|$.
\end{proof}

\begin{corollary}
Suppose $A_n$ converges in the generalized strong resolvent sense to $A$. If $\lam\in\sig_{ess}(A)$, then for
every $\eps>0$ we have
\be
\lim_{n\to\infty} \dim\ran P_{A_n}((\lam-\eps,\lam+\eps)) = \infty.
\ee
\end{corollary}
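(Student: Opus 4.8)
The plan is to reduce the claim to a single application of Lemma~\ref{lemliminfproj}, exploiting the standard characterization that $\lam\in\sig_{ess}(A)$ precisely when $\dim\ran P_A((\lam-\delta,\lam+\delta))=\infty$ for every $\delta>0$. So I fix $\lam\in\sig_{ess}(A)$ and $\eps>0$, and aim to produce an open subinterval of $(\lam-\eps,\lam+\eps)$ to which Lemma~\ref{lemliminfproj} applies and whose $A$-spectral projection already has infinite rank.

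The only restriction built into Lemma~\ref{lemliminfproj} is that its endpoints must not be atoms of the spectral measure of $A$, i.e.\ one needs $P_A(\{\lam_0\})=P_A(\{\lam_1\})=0$. Since $\hr$ is separable, the spectral measure of $A$ can have at most countably many atoms (distinct atoms yield mutually orthogonal nonzero spectral projections, and only countably many such can fit in a separable space), so I can choose $\lam_0\in(\lam-\eps,\lam)$ and $\lam_1\in(\lam,\lam+\eps)$ outside this countable set. Then $(\lam_0,\lam_1)$ is an open interval with $\lam\in(\lam_0,\lam_1)\subseteq(\lam-\eps,\lam+\eps)$, and by the characterization above $\dim\ran P_A((\lam_0,\lam_1))=\infty$.

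With these endpoints, Lemma~\ref{lemliminfproj} gives
\[
\infty=\dim\ran P_A((\lam_0,\lam_1))\le\liminf_{n\to\infty}\dim\ran J_n^* P_{A_n}((\lam_0,\lam_1)) J_n.
\]
Since $\rank(J_n^* P J_n)\le\rank(P)$ for any bounded operator $P$ (inserting factors cannot increase the rank), the right-hand side is bounded above by $\liminf_{n\to\infty}\dim\ran P_{A_n}((\lam_0,\lam_1))$. Finally, the inclusion $(\lam_0,\lam_1)\subseteq(\lam-\eps,\lam+\eps)$ forces $P_{A_n}((\lam_0,\lam_1))\le P_{A_n}((\lam-\eps,\lam+\eps))$ and hence $\ran P_{A_n}((\lam_0,\lam_1))\subseteq\ran P_{A_n}((\lam-\eps,\lam+\eps))$, so that
\[
\liminf_{n\to\infty}\dim\ran P_{A_n}((\lam-\eps,\lam+\eps))\ge\liminf_{n\to\infty}\dim\ran P_{A_n}((\lam_0,\lam_1))=\infty.
\]
As a $\liminf$ equal to $\infty$ forces the full limit to be $\infty$, this is exactly the assertion $\lim_{n\to\infty}\dim\ran P_{A_n}((\lam-\eps,\lam+\eps))=\infty$.

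The argument is essentially bookkeeping, and I do not expect a genuine obstacle: the rank inequality $\rank(J_n^* P J_n)\le\rank(P)$ and the monotonicity of spectral projections are routine. The one point that must be handled with care is the passage to the open subinterval $(\lam_0,\lam_1)$ with atom-free endpoints, since Lemma~\ref{lemliminfproj} supplies its decisive lower bound only for such intervals; everything else flows from the characterization of $\sig_{ess}$ and the monotone comparison of projections.
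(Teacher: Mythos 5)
Your proof is correct and follows essentially the same route as the paper: pass to an interval around $\lam$ whose endpoints are not atoms of the spectral measure (possible since there are only countably many atoms), apply Lemma~\ref{lemliminfproj} together with the characterization of $\sig_{ess}(A)$ via infinite-rank spectral projections, and conclude by monotonicity. The paper's proof is a two-line version of the same argument (it simply decreases $\eps$ rather than choosing an inner subinterval $(\lam_0,\lam_1)$); your write-up just makes explicit the rank comparison $\rank(J_n^* P J_n)\le\rank(P)$ and the monotonicity step that the paper leaves implicit.
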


\begin{proof}
By decreasing $\eps$ if necessary we can assume $P_A(\{\lam-\eps\})=P_A(\{\lam+\eps\})=0$. Hence the claim claim follows from
the previous lemma since the assumption implies $\dim\ran P_A((\lam-\eps,\lam+\eps)) = \infty$ for every $\eps>0$.
\end{proof}

For our next result recall the following well-known results:
If $P$, $Q$ are two projections with $\|P-Q\|<1$, then the dimension of their ranges agree.

\begin{lemma}\label{project}
Suppose $A_n$ converges to $A$ in generalized norm resolvent sense and $\lam$ is an isolated point of $\sig(A)$. Then
\be
\lim_{n\to\infty} J_n^{-1} P_{A_n}((\lam-\eps,\lam+\eps)) J_n = P_A(\{\lam\}),
\ee
for $\eps$ so small that $(\lam-\eps,\lam+\eps) \cap \sig(A) = \{\lam\}$. In particular, the dimensions of the ranges of both projections eventually agree.
\end{lemma}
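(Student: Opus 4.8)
The plan is to reduce everything to the continuous functional calculus already established in Theorem~\ref{fnorm}. First I would fix a continuous bump function $f\colon\R\to[0,1]$ equal to $1$ on $[\lam-\eps/2,\lam+\eps/2]$ and vanishing outside $(\lam-\eps,\lam+\eps)$. Since $f$ is bounded and has equal (namely zero) limits at $\pm\infty$, Theorem~\ref{fnorm} applies and gives $J_n^{-1}f(A_n)J_n\to f(A)$ in norm. The point of this choice is that, evaluated on the spectrum, $f$ collapses to the relevant spectral projection: because $\lam$ is isolated we use the hypothesis in the form $[\lam-\eps,\lam+\eps]\cap\sig(A)=\{\lam\}$ (i.e.\ with the endpoints also in $\rho(A)$, which is what the isolatedness of $\lam$ really provides). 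Then $f$ takes the value $1$ at $\lam$ and the value $0$ on all of $\sig(A)\setminus\{\lam\}$, so that $f(A)=P_A(\{\lam\})=P_A((\lam-\eps,\lam+\eps))$.

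The second step is to identify $f(A_n)$ with the genuine spectral projection $P_{A_n}((\lam-\eps,\lam+\eps))$ for large $n$. The functions $f$ and $\chi_{(\lam-\eps,\lam+\eps)}$ differ only on the compact set $K:=[\lam-\eps,\lam-\eps/2]\cup[\lam+\eps/2,\lam+\eps]$, which by the above normalization is contained in $\rho(A)$. By Theorem~\ref{spe-cov}, every point of $\rho(A)$ has a neighborhood eventually contained in $\rho(A_n)$; covering $K$ by finitely many such neighborhoods and taking the largest threshold shows $K\subseteq\rho(A_n)$ for all large $n$. For these $n$ the two functions agree on $\sig(A_n)$, whence $f(A_n)=P_{A_n}((\lam-\eps,\lam+\eps))$. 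Combining with the first step yields
\[
J_n^{-1}P_{A_n}((\lam-\eps,\lam+\eps))J_n=J_n^{-1}f(A_n)J_n\longrightarrow f(A)=P_A(\{\lam\}),
\]
which is the asserted convergence.

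For the statement about dimensions I would set $Q_n:=J_n^{-1}P_{A_n}((\lam-\eps,\lam+\eps))J_n$. Since $J_n$ is invertible for large $n$ by \eqref{Jcon}, $Q_n$ is again idempotent and $\rank Q_n=\rank P_{A_n}((\lam-\eps,\lam+\eps))$. As $Q_n\to P_A(\{\lam\})$ in norm, we have $\|Q_n-P_A(\{\lam\})\|<1$ eventually, so the quoted fact that two idempotents at distance less than $1$ have equal rank applies: the short argument is that each of the two idempotents is bounded below, hence injective, on the range of the other, giving both inequalities between the dimensions. This yields $\rank Q_n=\dim\ran P_A(\{\lam\})$, and therefore the ranges of $P_{A_n}((\lam-\eps,\lam+\eps))$ and $P_A(\{\lam\})$ have the same dimension for all large $n$.

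Two places need care. The first is the normalization of $\eps$ so that the endpoints $\lam\pm\eps$ lie in $\rho(A)$: without it, spectrum of $A_n$ can accumulate at $\lam\pm\eps$ from inside the interval and the projections $P_{A_n}((\lam-\eps,\lam+\eps))$ need not stabilize, so this is exactly where isolatedness of $\lam$ is genuinely used (and where the open-interval hypothesis must be read with endpoints excluded from $\sig(A)$). The second, and the real subtlety, is that $Q_n$ is conjugate to a projection by the \emph{non-unitary} operator $J_n$, hence is only an idempotent and not an orthogonal projection; thus the perturbation result preceding the lemma has to be invoked in its (equally elementary) idempotent version rather than the orthogonal one. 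I expect the endpoint/normalization issue to be the main obstacle, the rest being routine once the functional calculus and spectral convergence are in hand.
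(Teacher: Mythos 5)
Your proof is correct and follows essentially the same route as the paper's: the same continuous bump function $f$, Theorem~\ref{fnorm} to get $J_n^{-1}f(A_n)J_n \to f(A)$ in norm, the spectral convergence result (Theorem~\ref{spe-cov}) to conclude $f(A_n)=P_{A_n}((\lam-\eps,\lam+\eps))$ for large $n$, and rank stability of projections under norm-small perturbation. The two subtleties you flag --- reading the hypothesis so that the endpoints $\lam\pm\eps$ lie in $\rho(A)$ (which isolatedness indeed supplies for all sufficiently small $\eps$), and invoking the rank fact in its idempotent version since $J_n^{-1}P_{A_n}(\cdot)J_n$ is not an orthogonal projection --- are precisely the details the paper's terse proof glosses over, and your resolutions of both are sound.
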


\begin{proof}
Let $f$ be a continuous function which is $1$ one $(\lam-\eps/2,\lam+\eps/2)$ and zero outside $(\lam-\eps,\lam+\eps)$.
Then, for sufficiently large $n$ we have that $\sig(A_n) \cap \{ \alpha \mid \eps/2 < |\alpha -\lam| < \eps \} = \emptyset$.
Hence $f(A_n) = P_{A_n}((\lam-\eps,\lam+\eps))$ and $f(A) = P_A(\{\lam\})$ such that $J_n^{-1} P_{A_n}((\lam-\eps,\lam+\eps)) J_n \to P_A(\{\lam\})$ and the claim follows.
\end{proof}

\begin{theorem}
Let $A_n$ and $A$ be self-adjoint operators. If $A_n$ converges to $A$ in  generalized  norm resolvent sense, we have $\sig_{ess}(A) =
\lim\limits_{n\to\infty} \sig_{ess}(A_n)$.
Here $\lim\limits_{n\to\infty} \sig_{ess}(A_n)$ denotes the set of all $\lam$ for which there is a
sequence $\lam_n\in\sig_{ess}(A_n)$ converging to $\lam$.
\end{theorem}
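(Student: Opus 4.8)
The plan is to prove the two inclusions separately, using throughout the characterization that $\lam\in\sig_{ess}(A)$ exactly when $\dim\ran P_A((\lam-\eps,\lam+\eps))=\infty$ for every $\eps>0$, together with the functional calculus convergence of Theorem~\ref{fnorm}.

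For the inclusion $\lim_{n\to\infty}\sig_{ess}(A_n)\subseteq\sig_{ess}(A)$ I would argue by contradiction. Suppose $\lam_n\in\sig_{ess}(A_n)$ with $\lam_n\to\lam$ but $\lam\notin\sig_{ess}(A)$. Then $\lam$ is either in $\rho(A)$ or an isolated eigenvalue of $A$ of finite multiplicity. In the first case Theorem~\ref{spe-cov} gives a neighborhood of $\lam$ eventually contained in $\rho(A_n)$; in the second, Lemma~\ref{project} shows that for small $\eps$ the rank of $P_{A_n}((\lam-\eps,\lam+\eps))$ eventually equals $\dim\ran P_A(\{\lam\})<\infty$. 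Either way $\sig_{ess}(A_n)\cap(\lam-\eps,\lam+\eps)=\emptyset$ for large $n$, since an interval carrying a finite-rank spectral projection cannot meet the essential spectrum; this contradicts $\lam_n\to\lam$.

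The reverse inclusion $\sig_{ess}(A)\subseteq\lim_{n\to\infty}\sig_{ess}(A_n)$ is where the real work lies, and I expect it to be the main obstacle: knowing only that $\dim\ran P_{A_n}((\lam-\eps,\lam+\eps))\to\infty$, which is what Lemma~\ref{lemliminfproj} would give, is not enough, because a large but finite-dimensional spectral subspace produces no essential spectrum. The step I would use to upgrade this is a compactness transfer. Fix $\lam\in\sig_{ess}(A)$ and $\delta>0$, and choose a continuous $f$ with $0\le f\le 1$, equal to $1$ on $[\lam-\delta/2,\lam+\delta/2]$ and supported in $(\lam-\delta,\lam+\delta)$. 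Since $\lam\in\sig_{ess}(A)$, the operator $f(A)$ acts as the identity on an infinite-dimensional spectral subspace and is therefore not compact. By Theorem~\ref{fnorm}, $J_n^{-1}f(A_n)J_n\to f(A)$ in norm; as $J_n$ is boundedly invertible for large $n$, the operator $J_n^{-1}f(A_n)J_n$ is compact if and only if $f(A_n)$ is, and the compact operators are norm-closed. Hence $f(A_n)$ must be non-compact for all large $n$, which forces $\dim\ran P_{A_n}((\lam-\delta,\lam+\delta))=\infty$, since a finite-rank $P_{A_n}((\lam-\delta,\lam+\delta))$ would make the compactly supported $f(A_n)$ finite rank.

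Finally, an infinite-dimensional spectral projection on the bounded interval $(\lam-\delta,\lam+\delta)$ forces $\sig_{ess}(A_n)\cap[\lam-\delta,\lam+\delta]\neq\emptyset$, as otherwise the spectrum in this compact interval would be a finite set of finite-multiplicity eigenvalues. Taking $\delta=1/k$ and running a routine diagonal selection then produces a sequence $\lam_n\in\sig_{ess}(A_n)$ with $\lam_n\to\lam$, giving $\lam\in\lim_{n\to\infty}\sig_{ess}(A_n)$ and completing the argument.
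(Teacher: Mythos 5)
Your proof is correct, but the key direction is argued quite differently from the paper. For the inclusion $\lim_{n\to\infty}\sig_{ess}(A_n)\subseteq\sig_{ess}(A)$ you and the paper do essentially the same thing: reduce to Lemma~\ref{project} (you additionally invoke Theorem~\ref{spe-cov} for $\lam\in\rho(A)$, which the paper leaves implicit). For the hard inclusion $\sig_{ess}(A)\subseteq\lim_{n\to\infty}\sig_{ess}(A_n)$ the paper argues by contradiction with a singular Weyl sequence: if no $\lam_n\in\sig_{ess}(A_n)$ approaches $\lam$, one extracts a subsequence with $\dim\ran P_{A_{n_k}}((\lam-\eps,\lam+\eps))<\infty$, chooses $\psi_k\rightharpoonup 0$, $\|\psi_k\|=1$, with $(R_A(\I)-(\lam-\I)^{-1})\psi_k\to 0$, and uses that the finite-rank projections annihilate $J_n\psi_k$ asymptotically to force $\|J_n^* R_{A_n}(\I)J_n-R_A(\I)\|$ to stay bounded away from zero, a quantitative contradiction with norm resolvent convergence. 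Your argument replaces this resolvent estimate by a structural one: $f(A)$ is non-compact because $\lam\in\sig_{ess}(A)$; since $J_n^{-1}f(A_n)J_n\to f(A)$ in norm by Theorem~\ref{fnorm}, $J_n$ is boundedly invertible for large $n$, and the compact operators form a norm-closed ideal, $f(A_n)$ must eventually be non-compact, which forces $\dim\ran P_{A_n}((\lam-\delta,\lam+\delta))=\infty$ and hence essential spectrum in $[\lam-\delta,\lam+\delta]$; a diagonal selection over $\delta=1/k$ finishes. What your route buys is conceptual economy---no Weyl sequences and no explicit estimates, just the principle that essential spectrum is an obstruction to compactness and that non-compactness is stable under norm limits and conjugation by invertibles---at the price of invoking the full functional calculus of Theorem~\ref{fnorm} for a compactly supported $f$, where the paper works with the resolvent alone. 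One shared caveat (equally present in the paper's own proof): your diagonal step tacitly reads the limit set as allowing sequences $\lam_n\in\sig_{ess}(A_n)$ defined only for all sufficiently large $n$, since some individual $\sig_{ess}(A_n)$ could a priori be empty; this is the standard interpretation of such limit sets, so it is not a genuine gap.
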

\begin{proof}
If $\lam\notin\sig_{ess}(A)$, then $\lam$ is in the discrete spectrum (or in the resolvent set) and the claim follows from
Lemma~\ref{project}.

Conversely, let $\lam\in\sig_{ess}(A)$. If there were no sequence $\lam_n \in\sig_{ess}(A_n)$ converging to $\lam$,
we could find $\eps>0$ and a subsequence $n_k$ such that $\dim \ran P_{A_{n_k}}((\lam-\eps,\lam+\eps))<\infty$.
We pass to the subsequence for notational convenience.
Moreover, we can choose a singular Weyl sequence $\psi_k$ with $\|\psi_k\| =1$, $\psi_k\rightharpoonup 0$, and
$\|R_A(\I) - (\lam-\I)^{-1})\psi_k\|\to 0$. Then
\begin{align*}
&\| J_n^* R_{A_n}(\I) J_n - R_A(\I)\| \ge \limsup_{k\to\infty} \| (J_n^* R_{A_n}(\I) J_n - R_A(\I)) \psi_k\|\\
& \qquad = \limsup_{k\to\infty} \| (J_n^* R_{A_n}(\I) J_n - (\lam-\I)^{-1}) \psi_k\|\\
& \qquad = \limsup_{k\to\infty} \| J_n^* (R_{A_n}(\I) - (\lam-\I)^{-1}) J_n \psi_k\|\\
& \qquad = \limsup_{k\to\infty} \| J_n^* (R_{A_n}(\I) - (\lam-\I)^{-1}) P_{A_n}(\R\setminus(\lam-\eps,\lam+\eps) )J_n \psi_k\|\\
& \qquad \ge \|(J_n^*)^{-1} \| \frac{\eps}{\sqrt{1+\eps^2}} \|J_n^{-1} \|
\end{align*}
contradicting norm resolvent convergence. Here we have used that $J_n^*P_{A_n}((\lam-\eps,\lam+\eps) J_n \psi_k \to 0$ by compactness.
\end{proof}

\section{Applications to Sturm--Liouville operators}

In this section, we apply the generalized norm resolvent convergence to  Sturm--Liouville operators in different Hilbert spaces. 
Throughout this section we will make the usual assumptions on the coefficients:
\begin{hypothesis}\label{hyp}
Assume that $I=(a,b)$ is some interval. Let $w,p^{-1}, q \in  L^1_{loc}( I,\mathrm{d}x)$ be real-valued and $p, w>0$ a.e. in $I$.
\end{hypothesis}

Then we consider the Sturm--Liouville operator:
\begin{equation}
    \begin{split}
        T:\quad \db(T) &\rightarrow  L^2( I,w(x)\mathrm{d}x)\\
        f&\mapsto \frac{1}{w}\bigg(-\frac{\mathrm{d}}{\mathrm{d}x}p\frac{\mathrm{d}f}{\mathrm{d}x}+qf\bigg),
    \end{split}
\end{equation}
where
\be
\db(T) \subseteq \{f\in L^2(I,w(x)\mathrm{d}x):f,pf^{\prime}\in AC( I),\frac{1}{w}\big(-(pf^\prime)^\prime+qf\big)\in  L^2(I,w(x)\mathrm{d}x)\}.
\ee
In order for $T$ to be self-adjoint, one might have to impose additional boundary conditions, but we postpone this issue for
now.

For our present purpose, we will need the corresponding form domain. Most results we are aware of consider the case of Schr\"odinger
operators ($w=p=1$), see for example \cite{tschroe}, or the case of regular operators, see \cite{GST96}. To compute
the form domain in a more general situation, we will first set $q$ equal to $0$ and factorize $T$ according to $A^*A$
such that we get $\qb(T)=\db(A)$.

To this end define \begin{equation*}
\begin{split}
    A_+:\quad \db(A_+) &\rightarrow L^2( I,w(x)\mathrm{d}x)\\
    f&\mapsto\frac{1}{w} \sqrt{w p} \frac{\mathrm{d}f}{\mathrm{d}x},
\end{split}
\end{equation*}
where $\db(A_+):= \{ f \in  L^2( I,w(x)\mathrm{d}x): f \in  AC( I), \sqrt{p} f^\prime \in  L^2( I,\mathrm{d}x) \}$
and
\begin{equation*}
\begin{split}
    A_-:\quad \db(A_-) &\rightarrow L^2( I,w(x)\mathrm{d}x)\\
    f&\mapsto-\frac{1}{w} \frac{\mathrm{d}}{\mathrm{d}x} \sqrt{w p}f,
\end{split}
\end{equation*}
where $\db(A_-):= \{ f \in  L^2( I,w(x)\mathrm{d}x):\sqrt{w p} f \in  AC( I), \frac{1}{\sqrt{w}}(\sqrt{w p} f )^\prime \in L^2( I,\mathrm{d}x) \}$.

Moreover, we will also need the associated minimal operators $A_{0,\pm}$ which are given by $A_\pm$ restricted to
functions with compact support. 

Note that for $(c,d)\subseteq (a,b)$, $f\in\db(A_+)$ and $g\in\db(A_-)$ integration by parts shows
\begin{align}\nn
& \int_c^d g(x) \sqrt{w(x)p(x)} f'(x) \mathrm{d}x\\ \label{eqintbpart}
& \quad = (\sqrt{w p} g)(d) f(d) - (\sqrt{w p} g)(c) f(c) - \int_c^d (\sqrt{w p} g)'(x) f(x) \mathrm{d}x.
\end{align}
Moreover, since all integrands are integrable, the limits of the boundary terms exist as $c\downarrow a$ and $d \uparrow b$.

Also, if $g\in  L^2( I, w(x)\mathrm{d}x)$ then we have that
\be\label{eqfp}
f_+(x) := f_+(c) + \int_c^x \sqrt{\frac{w(y)}{p(y)}} g(y) \mathrm{d}y
\ee
satisfies $f_+\in  AC( I)$ and $\sqrt{p/w}(f_+)' = g$ and
\be\label{eqfm}
f_-(x) = \frac{1}{\sqrt{p(x) w(x)}} \left( (\sqrt{w p} f_-)(c) + \int_x^c w(y) g(y) \mathrm{d}y \right)
\ee
satisfies $\sqrt{w p} f_-\in  AC( I)$ and $-1/w(\sqrt{w p} f_-)' = g$.

The following type of result is well-known. We include a proof for the sake of completeness.

\begin{lemma}
    The operators $A_{0,\pm}$ are densely defined and their closures are given by 
    $$ \ol{A_{0,+}}f=A_{+}f,\quad \db(\ol{A_{0,+}}) = \{ f \in \db(A_+): \lim_{x\to a,b} (\sqrt{w p} g)(x) f(x) = 0,\;\forall g\in\db(A_-)\}$$
    and
    $$ \ol{A_{0,-}}f=A_{-}f,\quad \db(\ol{A_{0,-}}) = \{ f \in \db(A_-): \lim_{x\to,a,b} (\sqrt{w p} f)(x) g(x) = 0,\;\forall g\in\db(A_+)\}.$$
    Their adjoint operators are given by
    $$A_{0,\pm}^*f=A_{\mp}f,\quad \db(A_{0,\pm}^*)=\db(A_{\mp}).$$
\end{lemma}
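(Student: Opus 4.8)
The plan is to treat the two families symmetrically and to extract everything from the integration-by-parts identity \eqref{eqintbpart}, which is the Green/Lagrange identity for this pair of operators. Reading \eqref{eqintbpart} with $f\in\db(A_+)$ and $g\in\db(A_-)$ and letting $c\downarrow a$, $d\uparrow b$ (the limits exist by the remark after \eqref{eqintbpart}) gives
\[
\spr{g}{A_+f}-\spr{A_-g}{f}=\lim_{x\to b}(\sqrt{wp}\,\ol{g}\,f)(x)-\lim_{x\to a}(\sqrt{wp}\,\ol{g}\,f)(x).
\]
I would first establish the adjoint formulas $A_{0,\pm}^*=A_\mp$ and only then read off the closures as double adjoints, $\ol{A_{0,+}}=A_{0,+}^{**}=(A_-)^*$ and $\ol{A_{0,-}}=(A_+)^*$; this bookkeeping automatically produces closed operators and cleanly separates the algebraic part from the boundary analysis. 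Of course density must be secured first, since it is needed even to form $A_{0,\pm}^*$.

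For density, and for the nontrivial inclusion of the adjoint, the key is to use the explicit antiderivatives \eqref{eqfp} and \eqref{eqfm} in place of $C_c^\infty$. Given $g_0\in C_c(I)$ with $\int_I\sqrt{w/p}\,g_0\,\mathrm{d}x=0$, the function $f(x)=\int_c^x\sqrt{w/p}\,g_0$ has compact support and satisfies $\sqrt{p}\,f'=\sqrt{w}\,g_0\in L^2(I,\mathrm{d}x)$, so $f\in\db(A_{0,+})$; note that this uses only $w\in L^1_{loc}$, whereas a plain bump function would require $p\in L^1_{loc}$, which is not assumed. Testing a putative element of the orthogonal complement against these $f$ and applying Fubini reduces orthogonality to $\int \sqrt{w/p}\,g_0\,\tilde H=0$ for all such $g_0$, where $\tilde H(y)=\int_y^d \ol{h}\,w$; this forces $\tilde H$ to be constant, hence $\tilde H\equiv 0$ by continuity and $\tilde H(d)=0$, giving $h=0$ since $w>0$. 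The inclusion $A_-\subseteq A_{0,+}^*$ is immediate from \eqref{eqintbpart} because compact support kills the boundary terms, and for the reverse inclusion I would pair an arbitrary $g\in\db(A_{0,+}^*)$ against the same family of $f$: Fubini and a du~Bois-Reymond argument show $\sqrt{wp}\,\ol{g}$ agrees with an absolutely continuous function of derivative $-w\,\ol{\tilde g}$, whence $g\in\db(A_-)$ and $A_-g=\tilde g$. Thus $A_{0,+}^*=A_-$, and the analogous construction with the constraint $\int\sqrt{w}\,\chi=0$ yields $A_{0,-}^*=A_+$.

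With the adjoints in hand, $\ol{A_{0,+}}=(A_-)^*$. Repeating the weak-derivative computation for $(A_-)^*$ — pairing $f\in\db((A_-)^*)$ against functions $g$ built from $\chi\in C_c(I)$ with $\int\sqrt w\,\chi=0$ via \eqref{eqfm} — shows $f\in\db(A_+)$ with $(A_-)^*f=A_+f$, and hence $\ol{A_{0,+}}f=A_+f$. Substituting this back into the Lagrange identity above then forces the \emph{total} boundary form $[\sqrt{wp}\,\ol{g}f]_a^b=0$ for every $g\in\db(A_-)$; symmetrically $(A_+)^*=\ol{A_{0,-}}$ with the dual boundary form.

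The hard part will be upgrading ``the total boundary form vanishes for all $g$'' to the stated condition that the two limits $\lim_{x\to a}$ and $\lim_{x\to b}$ of $\sqrt{wp}\,\ol{g}f$ vanish \emph{separately}. The natural mechanism is localization: given $g\in\db(A_-)$, produce $g'\in\db(A_-)$ reproducing the boundary behaviour of $g$ at $a$ while vanishing near $b$, so that the total-form identity collapses to the single endpoint $a$ and yields $\lim_{x\to a}(\sqrt{wp}\,\ol{g}f)=0$. The delicate point is that a naive cutoff $g'=\eta g$ lies in $\db(A_-)$ only if $\eta'\sqrt{p}\,g\in L^2$, i.e.\ only if $1/w$ is integrable on the support of $\eta'$, which the bare hypothesis $w,p^{-1},q\in L^1_{loc}$ does not guarantee (and symmetrically the $A_+$ side sees $\int p$). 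I would therefore construct the localized functions through their derivatives using \eqref{eqfp}/\eqref{eqfm}, which keeps $G'/\sqrt{w}$ (resp.\ $\sqrt{p}f'$) in $L^2$ by design, or, where even that is obstructed, from solutions of the homogeneous equation near the endpoint, exploiting that the relevant limits in \eqref{eqintbpart} exist. This endpoint-separation — showing that $\db(A_-)$ is rich enough to detect each boundary contribution independently — is the technical heart of the lemma; once it is in place, the stated domains for $\ol{A_{0,\pm}}$ follow at once.
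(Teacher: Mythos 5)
Your proposal is correct and follows essentially the same route as the paper: compute $A_{0,\pm}^* = A_\mp$ from the Lagrange identity \eqref{eqintbpart} together with a du~Bois-Reymond argument using the compactly supported antiderivatives \eqref{eqfp}/\eqref{eqfm}, obtain density, identify $\ol{A_{0,\pm}} = (A_\mp)^*$ via double adjoints, and then separate the two endpoint contributions of the boundary form by a cutoff in $\db(A_\mp)$. On that last step you are in fact more explicit than the paper, which merely asserts the existence of a $\tilde h\in\db(A_\mp)$ agreeing with $h$ near $a$ and vanishing near $b$: your observation that a naive cutoff $\eta g$ requires local integrability of $1/w$ (resp.\ $p$), and that the cutoff should instead be built through its derivative with profile adapted to the weight via \eqref{eqfm}/\eqref{eqfp}, supplies exactly the construction the paper leaves implicit.
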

\begin{proof}
By \eqref{eqintbpart} we have $\db(A_\mp)\subseteq \db(A_{0,\pm}^*)$
and it remains to show $\db(A_{0,\pm}^*)\subseteq \db(A_\mp)$. If we have
\[
\spr{h}{A_{0,\pm} f} = \spr{k}{f}, \qquad \forall f\in \db(A_{0,\pm}),
\]
then by \eqref{eqfp}, \eqref{eqfm} we can find a $\tilde{h}$ which is locally in $\db(A_\mp)$ such that $A_\mp \tilde{h} = k$ and integration by parts shows
\[
\int_a^b (h(x)-\tilde{h}(x)^*) A_\pm f(x) \mathrm{d}x =0, \qquad \forall f\in \db(A_{0,\pm}).
\]
Now consider the linear functionals
\[
\ell(g)= \int_a^b (h(x)-\tilde{h}(x))^* g(x) w(x) \mathrm{d}x, \quad
\ell_\pm(g)= \int_a^b u_\mp(x)^* g(x) w(x) \mathrm{d}x,
\]
on $ L^2_c(I,w(x)\mathrm{d}x)$, where $u_+ = 1$ and $u_- = \frac{1}{\sqrt{w p}}$.
Then $g\in\ker(\ell_\pm)$ implies
\be
f(x)=\begin{cases}
\int_a^x \sqrt{\frac{w(y)}{p(y)}} g(y) \mathrm{d}y \in \db(A_{0,+})\quad \text{when $g\in\ker(\ell_+)$}\\
\frac{1}{\sqrt{p(x) w(x)}} \int_x^b w(y) g(y) \mathrm{d}y \in \db(A_{0,-})\quad \text{when $g\in\ker(\ell_-)$}
\end{cases}
\ee
and $A_\pm f =g$.
Hence $\ell(g)=0$, i.e., $g\in\ker(\ell)$. So by \cite[Lemma~9.3]{tschroe} we conclude $\ell=\alpha_\mp \ell_\pm$, where $\alpha_\mp$ are constants.
And thus $h(x)=\tilde{h}(x)+\alpha_\mp u_\mp(x) \in\db(A_\mp)$ as required. 

If $\db(A_{0,\pm})$ were not dense, then for $h=0$ we could choose some nonzero $k\in\db(A_{0,\pm})^\perp$
and our calculation form above would give the contradiction $k= A_\mp \tilde{h} = A_\mp h =0$.
So we have shown $A_{0,\pm}^*=A_\mp$.

Finally, abbreviate $\db_\pm:= \{ f \in \db(A_\pm): \lim_{x\to,a,b} (\sqrt{w p} f)(x) g(x) = 0,\;\forall g\in\db(A_\mp)\}$. Then
\eqref{eqintbpart} shows $\db_\pm \subseteq \db(A_{0,\pm}^{**}) = \db(\ol{A_{0,\pm}})$. Conversely,
since $\ol{A_{0,\pm}}= A_\mp^* \subseteq A_\pm$, we can use \eqref{eqintbpart} to conclude
\[
\sqrt{w(b) p(b)}f(b)h(b) - \sqrt{w(a) p(a)}f(a)h(a)=0, \quad f\in\db(\ol{A_{0,\pm}}), h \in \db(A_\mp).
\]
Now replace $h$ by a $\tilde{h}\in\db(A_\mp)$ which coincides with $h$ near $a$ and
vanishes identically near $b$.
Then $$\sqrt{w(a) p(a)}f(a)h(a) = \sqrt{w(a) p(a)}f(a)\tilde{h}(a) - \sqrt{w(b) p(b)}f(b)\tilde{h}(b)=0.$$
Finally, $\sqrt{w(b) p(b)}f(b)h(b)= \sqrt{w(a) p(a)}f(a)h(a)=0$ shows $f\in\db_\pm$. 
\end{proof}

Based on this result, we define
\be
\qb := \db(A_+), \qquad
\qb_0 := \{ f \in \qb: \lim_{x\to a,b} (\sqrt{w p} g)(x) f(x) = 0,\;\forall g\in\db(A_-) \}.
\ee
Then we get two associated operators, the Friedrichs extension $A_- \ol{A_{0,+}}$ with form domain $\qb_0$ and
$\ol{A_{0,-}} A_+$ with form domain $\qb$. If the underlying differential expression
$-\frac{1}{w} \frac{\mathrm{d}}{\mathrm{d}x}p\frac{\mathrm{d}}{\mathrm{d}x}$ is limit point at both end points, then
both operators will of course agree.

In order to incorporate $q$, we will assume  $w,p\geq\delta$ for some constant $\delta>0$ such that $ L^2(I, w(x)\mathrm{d}x)\subseteq  L^2(I,\mathrm{d}x)$ and $\qb\subseteq H^1(I,\mathrm{d}x)$, where $H^1(I,\mathrm{d}x):=W^{1,2}(I,\mathrm{d}x)$ is the Sobolev space.

\begin{lemma}\label{qfree}
Suppose $w, p, q$ satisfy
\be\label{hypq}
w,p\geq \delta,\quad M:=\sup_{k\in\mathbb{Z}} \int_{[k,k+1)\cap I} |q(x)| \mathrm{d}x<\infty, 
\ee
where $\delta$ is a positive constant. If $I$ is compact, then the condition on $q$ amounts to requiring that $q$ is integrable.

Then $q/w$ is relatively form bounded with bound $0$ with respect to both $A_- \ol{A_{0,+}}$ and $\ol{A_{0,-}} A_+$, i.e., for every $\epsilon>0$,
$$\int_I|q(x)||f(x)|^2\mathrm{d}x\leq M\bigg(\epsilon\int_Ip(x)|f^\prime(x)|^2\mathrm{d}x+\bigg(1+\frac{1}{\epsilon}\bigg)\int_I|f(x)|^2w(x)\mathrm{d}x\bigg),\quad f\in\qb.$$
\end{lemma}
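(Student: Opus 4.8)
The plan is to reduce everything to a one-dimensional Sobolev embedding on intervals of unit length and then sum, the uniform local integrability bound $M$ being exactly what makes the summation work. Note first that both operators in question have the same quadratic form: since $A_{0,+}^*=A_-$ and $A_{0,-}^*=A_+$, the Friedrichs extension $A_-\ol{A_{0,+}}$ equals $\ol{A_{0,+}}^*\,\ol{A_{0,+}}$ (form domain $\qb_0$) and $\ol{A_{0,-}}A_+=A_+^*A_+$ (form domain $\qb$), and in both cases the form is $\mathbf{q}(f)=\|A_+f\|^2=\int_I p|f'|^2\,dx$ because $A_+f=\sqrt{p/w}\,f'$. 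Since $\qb_0\subseteq\qb$, it therefore suffices to prove the displayed inequality for all $f\in\qb=\db(A_+)$; the Friedrichs case follows by restriction. With $\|f\|^2_{L^2(I,w\,dx)}$ in place and $\mathbf{q}(f)=\int_Ip|f'|^2$, the right-hand side is exactly $M\big(\epsilon\,\mathbf{q}(f)+(1+\tfrac1\epsilon)\|f\|^2\big)$, so the inequality is literally relative form boundedness with bound $0$.

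First I would record the pointwise Sobolev-type estimate on an interval $J$ of length one: for every $\eta>0$ and every $f$ that is absolutely continuous on $J$ with $f'\in L^2(J)$,
\[
\sup_{x\in J}|f(x)|^2 \le \eta\int_J |f'|^2\,dx + \Big(1+\tfrac1\eta\Big)\int_J |f|^2\,dx .
\]
This comes from $|f(x)|^2-|f(y)|^2=2\re\int_y^x \ol{f}\,f'$, whence $|f(x)|^2\le|f(y)|^2+2\int_J|f||f'|$ for all $y\in J$; averaging the base point $y$ over $J$ (of length one), taking the supremum in $x$, and applying Young's inequality $2|f||f'|\le\eta|f'|^2+\eta^{-1}|f|^2$ gives the claim. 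The hypothesis $w,p\ge\delta$ guarantees $\qb\subseteq H^1(I,\mathrm{d}x)$, so each $f\in\qb$ is admissible on every unit subinterval.

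Next I would localize. Partitioning $I$ into the intervals $I_k=[k,k+1)\cap I$ and using $\int_{I_k}|q|\le M$, the crude bound $\int_{I_k}|q||f|^2\le M\sup_{I_k}|f|^2$ combined with the Sobolev estimate and summation over $k$ yields $\int_I|q||f|^2\le M\big(\eta\int_I|f'|^2+(1+\tfrac1\eta)\int_I|f|^2\big)$. Passing from $\int_I|f'|^2,\int_I|f|^2$ to the weighted integrals $\int_I p|f'|^2,\int_I w|f|^2$ via $p,w\ge\delta$, and renaming the free parameter, produces an estimate of the stated form whose kinetic coefficient can be made arbitrarily small, which is precisely relative form bound $0$.

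The hard part will be the bookkeeping at the finitely many boundary pieces: if $I=(a,b)$ has finite, non-integer endpoints, the extreme intervals $I_k$ have length $<1$, so averaging the base point over them would ruin the constant. I would handle this by pairing each such partial interval with an adjacent full one, so the base point is still averaged over a set of measure comparable to one; this at worst doubles $M$ on these finitely many intervals and leaves the bound-$0$ conclusion intact. The only other mildly technical point is tracking the dependence on $\delta$ introduced when converting the unweighted $H^1$-estimate into the weighted quadratic form, which is routine once $p,w\ge\delta$ is invoked.
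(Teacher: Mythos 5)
Your proof is correct and is essentially the paper's own argument: the paper's proof of this lemma is a one-line citation to Lemma~9.33 of \cite{tschroe}, whose proof is precisely your unit-interval Sobolev bound (average the base point, Young's inequality) summed over the partition $[k,k+1)\cap I$, transferred to the weighted form via $p,w\ge\delta$. Your observation that the conversion to weighted integrals introduces $\delta$-dependent constants, so that one obtains the form-bound-$0$ statement rather than the literal constants displayed in the lemma, is consistent with how the paper itself invokes the result, inserting a constant $C(p,w)$ in the proof of Theorem~\ref{slnrc}.
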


\begin{proof}
This follows literally as in Lemma~9.33 from \cite{tschroe}.
\end{proof}

Hence we can define either $T = A_- \ol{A_{0,+}} + q/w$
with $\db(T) = \{ f \in \qb_0 \mid \ol{A_{0,+}} f \in \db(A_-)\}$ and $\qb(T) =\qb_0$ or $T = \ol{A_{0,-}} A_+ + q/w$ with $\db(T) = \{ f \in \qb \mid A_+ f \in \db(\ol{A_{0,-}})\}$ and $\qb(T) =\qb$ as form sums such that the associated
quadratic form is
\be
\int_a^b \big( p(x) f'(x)^* g'(x) + q(x) f(x)^* g(x)\big) \mathrm{d}x, \qquad f,g\in\qb(T).
\ee

\begin{theorem}\label{slnrc}
Suppose $w,p,q$ and $w_n,p_n,q_n$ both satisfy {\rm Hypothesis~\ref{hyp}}, \eqref{hypq} and $q_n,q\geq \gamma$ for some constant $\gamma\in\R$. If
\be
w_n/w \to 1, \quad p_n/p \to 1, \qquad C_n:=\sup_{k\in\mathbb{Z}} \int_{[k,k+1)\cap I} |q_n(x) - q(x)| \mathrm{d}x \to 0,
\ee
uniformly, then we have $T_n\rightarrow T$ in generalized norm resolvent convergence.
\end{theorem}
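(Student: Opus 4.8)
The plan is to deduce this from the form criterion, Lemma~\ref{q-nrc}. Following Example~\ref{jn}, I take $\hr=L^2(I,w\,\mathrm{d}x)$, $\hr_n=L^2(I,w_n\,\mathrm{d}x)$ and let $J_n\colon\hr\to\hr_n$ be the natural embedding $f\mapsto f$, so that $J_n^*$ is multiplication by $w_n/w$. Because $w_n/w\to1$ uniformly, we have $w_n\le C w$ for $n$ large, whence $J_n\in\mathcal{B}(\hr,\hr_n)$ with $\|J_n^*J_n-\id\|=\|J_nJ_n^*-\id\|=\|w_n/w-1\|_\infty\to0$; this is exactly \eqref{Jcon}. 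Likewise $p_n\le Cp$ eventually, so if $f\in\qb(T)$ then $\int_I p_n|f'|^2\,\mathrm{d}x\le C\int_I p|f'|^2\,\mathrm{d}x<\infty$ and $\int_I|f|^2w_n\,\mathrm{d}x\le C\int_I|f|^2w\,\mathrm{d}x<\infty$, so that $J_nf=f\in\qb(T_n)$; in the non-Friedrichs realization $\qb(T)=\qb$ no boundary condition is present and the inclusion $J_n\qb(T)\subseteq\qb(T_n)$ is immediate, while in the Friedrichs case one checks in addition that the boundary conditions match.

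Next I fix a common lower bound. Since $w\ge\delta$ and $w_n/w\to1$ give $w_n\ge\delta'$ for a uniform $\delta'>0$, and since $q,q_n\ge\gamma$ pointwise, the forms $\textbf{q}_T$ and $\textbf{q}_{T_n}$ are all bounded below by one constant, which (matching the notation of Lemma~\ref{q-nrc}) I again call $\gamma$, so that $\textbf{q}_{T-\gamma}\ge0$. Applying Lemma~\ref{qfree} to $q$ with $\epsilon_0$ so small that $M\epsilon_0\le\tfrac12$ gives $\int_I|q||f|^2\,\mathrm{d}x\le\tfrac12\int_I p|f'|^2\,\mathrm{d}x+c_0\|f\|^2$, and therefore the key reconciliation estimate
\[
\int_I p|f'|^2\,\mathrm{d}x\le 2\,\textbf{q}_{T-\gamma}(f)+c\,\|f\|^2,\qquad f\in\qb(T).
\]

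Since $J_nf=f$ as functions, the two forms differ only through their coefficients,
\[
\textbf{q}_{T_n}(J_nf)-\textbf{q}_T(f)=\int_I(p_n-p)|f'|^2\,\mathrm{d}x+\int_I(q_n-q)|f|^2\,\mathrm{d}x.
\]
The first term is bounded by $\eta_n\int_I p|f'|^2\,\mathrm{d}x$ with $\eta_n:=\|p_n/p-1\|_\infty\to0$. For the second I apply the inequality of Lemma~\ref{qfree} to the function $q_n-q$, whose local integrals are controlled by $C_n$, which with $\epsilon=1$ yields $\int_I|q_n-q||f|^2\,\mathrm{d}x\le C_n\big(\int_I p|f'|^2\,\mathrm{d}x+2\|f\|^2\big)$. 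Combining and inserting the reconciliation estimate gives
\[
|\textbf{q}_{T_n}(J_nf)-\textbf{q}_T(f)|\le 2(\eta_n+C_n)\,\textbf{q}_{T-\gamma}(f)+\big(c(\eta_n+C_n)+2C_n\big)\|f\|^2,
\]
so Lemma~\ref{q-nrc} applies with $a_n=2(\eta_n+C_n)\to0$ and $b_n=c(\eta_n+C_n)+2C_n\to0$, proving generalized norm resolvent convergence.

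I expect the main obstacle to be the reconciliation step: Lemma~\ref{q-nrc} measures the form difference against $\textbf{q}_{T-\gamma}$ rather than against the free part $\int_I p|f'|^2\,\mathrm{d}x$, so one must use the relative form boundedness of the potential (Lemma~\ref{qfree}) to dominate $\int_I p|f'|^2\,\mathrm{d}x$ by $\textbf{q}_{T-\gamma}(f)$ up to a multiple of $\|f\|^2$; securing a single constant $\gamma$ that serves as a common lower bound for all the $T_n$ simultaneously is the accompanying point that requires the uniform hypotheses. Bounding the individual $(p_n-p)$- and $(q_n-q)$-terms is then routine.
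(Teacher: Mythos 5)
Your proposal is correct and follows essentially the same route as the paper: the natural embedding $J_n$, the splitting of the form difference into the $p_n-p$ and $q_n-q$ parts, the use of Lemma~\ref{qfree} to control the potential term, and the conclusion via Lemma~\ref{q-nrc}. The only difference is that you spell out explicitly the ``reconciliation'' step dominating $\int_I p|f'|^2\,\mathrm{d}x$ by $2\,\textbf{q}_{T-\gamma}(f)+c\|f\|^2$, which the paper leaves implicit when invoking Lemma~\ref{q-nrc}; this is a worthwhile clarification, not a deviation.
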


\begin{proof}
It is easy to verify 
\begin{equation*}
    \begin{split}
        J_n:\quad   L^2(I,w(x)\mathrm{d}x)&\rightarrow L^2(I,w_n(x)\mathrm{d}x)\\
        f&\mapsto f,
    \end{split}
\end{equation*}
satisfies (\ref{Jcon}).
   Since  $$\int_I|f(x)|^2w_n(x)\mathrm{d}x=\int_I|f(x)|^2w(x)\frac{w_n(x)}{w(x)}\mathrm{d}x,$$ we have $J_n\qb(T)\subseteq \qb(T_n)$. Let $T=T^{(1)}+T^{(2)}$, where $T^{(1)}:=\frac{1}{w}(-\frac{\mathrm{d}}{\mathrm{d}x}p\frac{\mathrm{d}}{\mathrm{d}x})$ and $T^{(2)}:=\frac{q}{w}$ in $ L^2(I,w(x)\mathrm{d}x)$. For any $\psi\in\qb(T)$,
   \begin{equation*}
       \begin{split}
           |\textbf{q}_{T^{(1)}}(\psi)-\textbf{q}_{T_n^{(1)}}(J_n\psi)|&=\left|\int_I\bigg(\frac{p_n(x)}{p(x)}-1\bigg)p(x)|\psi^\prime(x)|^2\mathrm{d}x\right|\\
           &\leq \bigg\|\frac{p_n}{p}-1\bigg\|_{\infty}\int_Ip(x)|\psi^\prime(x)|^2\mathrm{d}x,
       \end{split}
   \end{equation*}
   and by Lemma \ref{qfree},
   \begin{equation*}
       \begin{split}
           &|\textbf{q}_{T^{(2)}}(\psi)-\textbf{q}_{T_n^{(2)}}(J_n\psi)|\leq\int_I\left|q(x)-q_n(x)\right||\psi(x)|^2\mathrm{d}x\\
           &\leq C_nC(p,w)\bigg(\epsilon\int_Ip(x)|\psi^\prime(x)|^2\mathrm{d}x+\bigg(1+\frac{1}{\epsilon}\bigg)\int_I|\psi(x)|^2w(x)\mathrm{d}x\bigg),
       \end{split}
   \end{equation*}
   where $C(p,w)$ is a positive constant associated with $p$ and $w$. 
   
   Note that $T_n,T\geq \gamma$ are bounded from below. Therefore, using Lemma \ref{q-nrc} we finally have $T_n\rightarrow T$ in generalized norm resolvent sense.
\end{proof}

The following result generalizes Theorem~3.5 from \cite{BSTT23}.

\begin{theorem}
Suppose $I=\R$, $w_0,p_0,q_0$ and $w_\infty,p_\infty,q_\infty$ both satisfy {\rm Hypothesis~\ref{hyp}}, \eqref{hypq} and $q_0, q_\infty\geq \gamma$ for some constant $\gamma\in\R$. If
$$
w_\infty(x)/w_0(x) \to 1, \quad p_\infty(x)/p_0(x) \to 1, \quad \text{as $|x|\to\infty$,}
$$
and
$$
\int_k^{k+1} |q_\infty(x) - q_0(x)| \mathrm{d}x \to 0,\quad \text{as $|k|\rightarrow \infty$.}
$$ 
Then $\sig_{ess}(T_\infty)=\sig_{ess}(T_0)$.
\end{theorem}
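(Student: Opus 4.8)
The plan is to interpolate between the two operators by a sequence that agrees with $T_\infty$ on a growing compact interval and with $T_0$ near the two endpoints, and then play the two available tools against each other: \textbf{Theorem~\ref{slnrc}} will force generalized norm resolvent convergence of the sequence to $T_\infty$, while the decomposition principle for Sturm--Liouville operators will pin the essential spectrum of every member of the sequence to that of $T_0$. Concretely, for $n\in\N$ define spliced coefficients
\[
w_n := \begin{cases} w_\infty, & |x|\le n,\\ w_0, & |x|>n,\end{cases}
\]
and analogously $p_n$ and $q_n$. Let $T_n$ denote the associated self-adjoint Sturm--Liouville operator on $L^2(I,w_n(x)\mathrm{d}x)$, using the same form realization (e.g.\ Friedrichs) as $T_0$ and $T_\infty$; this is legitimate because the endpoint classification of $T_n$ at $\pm\infty$ agrees with that of $T_0$, their coefficients coinciding near the endpoints.

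First I would verify the hypotheses of \textbf{Theorem~\ref{slnrc}} for the convergence $T_n\to T_\infty$. Since $w_n/w_\infty\equiv 1$ on $[-n,n]$ and equals $w_0/w_\infty$ for $|x|>n$, and since $w_0/w_\infty\to 1$ as $|x|\to\infty$, we get $\|w_n/w_\infty-1\|_\infty=\sup_{|x|>n}|w_0/w_\infty-1|\to 0$, and likewise $\|p_n/p_\infty-1\|_\infty\to 0$. For the potential, on any unit interval contained in $[-n,n]$ the integrand $|q_n-q_\infty|$ vanishes, while on the remaining intervals $|q_n-q_\infty|=|q_0-q_\infty|$ a.e., so that
\[
C_n=\sup_{k\in\Z}\int_{[k,k+1)\cap I}|q_n-q_\infty|\,\mathrm{d}x\le \sup_{|k|\ge n}\int_{k}^{k+1}|q_0-q_\infty|\,\mathrm{d}x\to 0
\]
by hypothesis. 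The spliced coefficients plainly still satisfy {\rm Hypothesis~\ref{hyp}}, \eqref{hypq} and $q_n\ge\gam$, so \textbf{Theorem~\ref{slnrc}} yields $T_n\to T_\infty$ in generalized norm resolvent sense, whence $\sig_{ess}(T_\infty)=\lim_{n\to\infty}\sig_{ess}(T_n)$ by our convergence theorem for essential spectra under generalized norm resolvent convergence.

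It then remains to show $\sig_{ess}(T_n)=\sig_{ess}(T_0)$ for every $n$; as this sequence of closed sets is constant, its limit set is exactly $\sig_{ess}(T_0)$ and the claim follows. For this I would invoke the decomposition principle: imposing separated (say Dirichlet) boundary conditions at the interior points $\pm n$ is a finite-rank perturbation in the resolvent sense and hence leaves the essential spectrum unchanged, so $\sig_{ess}(T_n)=\sig_{ess}(T_n^{(-\infty,-n)})\cup\sig_{ess}(T_n^{(n,\infty)})$ and likewise for $T_0$. The decoupled interior piece on the compact interval $[-n,n]$ is regular and thus has purely discrete spectrum, contributing nothing; crucially, the two exterior half-line operators for $T_n$ and $T_0$ are literally identical, as $w_n=w_0$, $p_n=p_0$, $q_n=q_0$ on $(-\infty,-n)$ and $(n,\infty)$ (so even the Hilbert spaces coincide there) and the endpoint boundary conditions match. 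Hence $\sig_{ess}(T_n)=\sig_{ess}(T_0)$, completing the argument. The main obstacle I anticipate is this last step: making the decomposition principle fully rigorous when the endpoints $\pm\infty$ are singular, where in the limit-circle case the exterior realizations carry boundary conditions at $\pm\infty$ that must be matched between $T_n$ and $T_0$, together with the bookkeeping needed to ensure that a single form realization chosen for $T_0$, $T_\infty$ and all $T_n$ is consistent, so that both \textbf{Theorem~\ref{slnrc}} and the decomposition apply to the very same operators.
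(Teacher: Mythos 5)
Your proposal is correct and follows essentially the same route as the paper: splice the coefficients at $\pm n$, invoke Theorem~\ref{slnrc} to get $T_n\to T_\infty$ in generalized norm resolvent sense, and use the Dirichlet decoupling/finite-rank (Weyl) decomposition principle to conclude $\sig_{ess}(T_n)=\sig_{ess}(T_0)$ for each $n$. You in fact supply more detail than the paper does, both in verifying the hypotheses of Theorem~\ref{slnrc} for the spliced coefficients and in flagging the endpoint/boundary-condition bookkeeping that the paper dismisses as well known.
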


\begin{proof}
Define
\[
w_n(x) = \begin{cases} w_\infty(x), & |x| \le n,\\  w_0(x), & |x| > n \end{cases}
\]
and similarly for $p_n$ and $q_n$. Then it is well-known that $\sig_{ess}(T_n)=\sig_{ess}(T_0)$ since the essential spectrum
does not change when the coefficients are changed on a compact set:
Adding extra (e.g.) Dirichlet boundary conditions at $-n$ and $n$ splits the operator into a direct sum of three operators which differ from the original one by a rank two perturbation. Hence by Weyl's theorem the essential spectrum of the original operator is the union of the essential spectra of the three parts. As the middle part corresponds to a regular opeartor it does not contribute to the essentials spectrum.

Using Theorem \ref{slnrc} we have $T_n\to T_\infty$ in generalized norm resolvent sense and the claim follows.
\end{proof}

\bigskip
\noindent
\textbf{Acknowledgments.}
Y. Wang and Z. Zhou thank the Faculty of Mathematics of the University of Vienna for the hospitality during extended visits in 2025.

Y. Wang and Z. Zhou were supported in part by the National Natural Science Foundation of China( No. 12271509, 12090010, 12090014).

\end{document}